\documentclass[11pt,oneside,a4paper,reqno]{amsart}

\usepackage[a4paper,width=160mm,top=25mm,bottom=30mm]{geometry}
\usepackage{amsmath}
\usepackage{amssymb}
\usepackage{amsthm}
\usepackage{mathrsfs}
\usepackage{mathtools}
\mathtoolsset{showonlyrefs}

\usepackage{graphicx}
\usepackage{float}
\usepackage{caption}
\usepackage[utf8]{inputenc}
\usepackage[T1]{fontenc}
\usepackage[english]{babel}

\usepackage{enumerate}
\usepackage[inline]{enumitem}
\setlist[enumerate]{leftmargin=*, label=\arabic*.}
\usepackage{xcolor}
\usepackage{algorithm}
\usepackage{algpseudocode}
\usepackage{todonotes}
\setuptodonotes{inline}
\usepackage{hyperref}
\usepackage{footnote}
\makesavenoteenv{abstract}

\usepackage[
  style=alphabetic,
  doi=false,
  isbn=false,
  url=false,
  eprint=false
]{biblatex}
\theoremstyle{plain}
\newtheorem{theorem}{Theorem}[section]
\newtheorem{corollary}[theorem]{Corollary}
\newtheorem{proposition}[theorem]{Proposition}

\newtheorem{lemma}[theorem]{Lemma}

\theoremstyle{remark}
\newtheorem{remark}[theorem]{Remark}

\theoremstyle{definition}

\DeclareFieldFormat*{title}{\mkbibemph{#1}}   
\DeclareFieldFormat{journaltitle}{#1\isdot}  
\DeclareFieldFormat{booktitle}{#1\isdot}     
\DeclareFieldFormat{maintitle}{#1\isdot}     
\DeclareFieldFormat{issuetitle}{#1\isdot}

\graphicspath{{./Images/}}

\title{\bfseries
Counting Minimal Vertex Cutsets and a Gap at 1 for Site Percolation on Vertex-Transitive Graphs
}
\author{Joel Bassil}
\begin{document}

\begin{abstract}
We consider site percolation on general graphs and give a sufficient condition on the escape probability of a certain random walk with degree-dependent conductances, such that the number of minimal vertex cutsets of size $n$ separating a given vertex from infinity is bounded above exponentially in $n$. This result extends Theorem~2 of \cite{easoCountingMinimalCutsets2025} under slightly stronger assumptions. Moreover, we give an alternative sufficient condition in terms of the isoperimetric dimension or, more specifically, an isoperimetric-type inequality.

Furthermore, our theorem is sufficient to extend the results of Panagiotis and Severo \cite{panagiotisGap1Percolation2023}, and show that there exists a universal positive constant $\varepsilon_1$ such that site percolation on every infinite, connected, locally finite, vertex-transitive graph satisfies $p_c = 1$ or $p_c \leq 1-\varepsilon_1$.
\end{abstract}

\maketitle
\markboth{JOEL BASSIL}{JOEL BASSIL}

\vspace{-2.5em}
\renewcommand{\abstractname}{AI Use Statement}    

\section{Introduction}\label{sec:intro}

Let $G = (V, E)$ be an infinite, connected, locally finite graph. A set of vertices $F \subset V$ is called a \textit{vertex cutset} from a vertex $v$ to $\infty$ if $v$ belongs to a finite connected component of the graph induced by $V\setminus F$, which we shall abbreviate by $G\setminus F$. A vertex cutset is called \textit{minimal} if no proper subset of it is a vertex cutset. Let $\mathcal{Q}_n^\mathrm{site}(v)$ be the set of minimal vertex cutsets from $v$ to $\infty$ of cardinality $n$ and consider the quantities
\[
q_n^\mathrm{site} := \sup_{v \in V} |\mathcal{Q}_n^\mathrm{site}(v)|, \quad \kappa^\mathrm{site}(G) := \sup_{n \geq 1} {q^\mathrm{site}_n}^{1/n}.
\]
These quantities appear naturally in the study of percolation where they play an important role in proving non-triviality of the phase transition; for example, within the Peierls argument \cite{Peierls_1936}. As such, we are interested in cases where $\kappa^\mathrm{site}(G) < \infty$.

Consider $G$ as a network $(V,E,c)$ with conductances $c:=(c(x,y))_{\{x,y\}\in E}$ where, for $x\sim y$,
\begin{equation}\label{conductances}
c(x,y):=1/\max(\deg(x),\deg(y)),
\end{equation}
and define the total conductance at a vertex $x$ to be $\pi(x):=\sum_{y\sim x}c(x,y)$, which is at most 1. Let $(X_t)_{t\in \mathbb{N}_0}$ be the discrete-time (lazy) random walk defined by the transition probabilities $p(x,y):=c(x,y)$ for $y\sim x$ and $p(x,x):= 1-\pi(x)$, and let $\mathbb{P}^X$ be its law with corresponding expectation $\mathbb{E}^X$. Note that throughout this paper, $(X_t)_{t\in \mathbb{N}_0}$ refers only to the random walk on the network of $G$ with these specific conductances. Our first theorem is the following.

\begin{theorem} \label{thm2_extend}
For every $\varepsilon\in(0,1)$ there exists $K:=K(\varepsilon)>0$ such that the following holds. Let $G$ be an infinite, connected, locally finite graph such that for all vertices $v\in V$,
\[
\mathbb{P}^X_v(\forall t \geq 1: X_t\neq v)\geq \varepsilon.
\]
Then $\kappa^{\mathrm{site}}(G) \leq K< \infty$.
\end{theorem}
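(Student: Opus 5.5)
The plan is to follow the Easo–Hutchcroft strategy for counting minimal cutsets, adapted to vertex cutsets and to the network with conductances \eqref{conductances}. The key idea is to encode each minimal vertex cutset $F\in\mathcal{Q}_n^{\mathrm{site}}(v)$ by a short random-walk-based description. Then $|\mathcal{Q}_n^{\mathrm{site}}(v)|$ is bounded by $e^{O(n)}$ via an entropy or union-bound argument.

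\textbf{Step 1 (potential-theoretic capacity bound).} Fix a minimal cutset $F$ and let $A$ be the finite component of $G\setminus F$ containing $v$. By minimality, every $f\in F$ is adjacent to $A$ and to the infinite component. Using the escape hypothesis, the effective conductance from any vertex $u$ to $\infty$ is at least $\varepsilon\pi(u)$. I would show that for each $f\in F$ the walk started at $f$ hits $F$ again, before escaping, with probability at most $1-\varepsilon$. Comparing the equilibrium measure of $F$ with $\sum_{f\in F}\pi(f)$ then gives a lower bound $\mathrm{Cap}(F)\ge c(\varepsilon)\sum_{f\in F}\pi(f)$. The degree-dependent conductances are chosen so that $\pi(f)\le 1$ and each vertex contributes $O(1)$ capacity regardless of degree. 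This is what makes a bound independent of the degree possible.

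\textbf{Step 2 (encoding).} Following the proof of Theorem~2 of \cite{easoCountingMinimalCutsets2025}, I would run the walk from $v$ and consider the last-exit and hitting distributions on $F$. The claim is that $F$ is determined by a ``skeleton'' $S\subset F$ of size $O(n)$ in a suitably coarse sense. I would define the skeleton as a connected set in an auxiliary graph, for instance $F$ together with vertices at graph distance $2$. The count of such sets requires bounded degree, which we lack. To avoid this, I would count via the walk: sample $m=Cn$ independent excursions of $X$. Step 1 ensures that with probability bounded below, the union of their traces pins down $F$ as the vertex boundary of what they visit. It follows that $|\mathcal{Q}_n|\cdot \min_F\mathbb{P}(\text{traces determine }F)\le 1$. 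Hence it suffices to show $\mathbb{P}(\text{traces determine }F)\ge e^{-Kn}$.

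\textbf{Step 3 (lower bound on determination probability).} For each $f\in F$, I need an excursion from $v$ or from an already-discovered vertex that hits $f$ at cost $e^{-O(1)}$ per vertex. The lazy-walk transition $p(x,y)=1/\max(\deg x,\deg y)$ can be tiny, so I would instead exploit the fact that cutset vertices are adjacent to both sides. I would use the escape probability to bound the Green function at $f$ by $1/\varepsilon$. Combining this with the capacity lower bound of Step 1 gives a hitting-measure statement: under the harmonic measure from infinity, each $f$ receives mass comparable to $\pi(f)/\mathrm{Cap}(F)$. I would then sum $\log$ costs over $F$.

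The main obstacle is Step 3, specifically controlling high-degree vertices in $F$. Harmonic measure can be spread thinly across many neighbours, and a naive union bound then picks up $\log\deg$ factors. My first attempt would be to show that the conductance choice \eqref{conductances} makes $\sum_{f\in F}\pi(f)\ge c\,n$ fail only when many $f$ have small $\pi(f)$. Such $f$ are adjacent to high-degree vertices, and I would then treat those high-degree neighbours as hubs. A hub can be charged once, because it reveals all its neighbours simultaneously. Making this charging argument rigorous is where I expect the real work to be.
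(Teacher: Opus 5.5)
Your plan has a genuine gap, and its first quantitative claim is false. Step~1 asserts that from each $f\in F$ the walk avoids returning to $F$ with probability at least $\varepsilon$, and hence that $\mathrm{Cap}(F)\ge c(\varepsilon)\sum_{f\in F}\pi(f)$. The hypothesis only controls returns to the single vertex $f$, not to the set $F$. Take $\mathbb{Z}^3$: all degrees are $6$, so $\pi\equiv1$, $X$ is simple random walk, and the hypothesis holds. There $F=\{x:\|x\|_1=R+1\}$ is a minimal vertex cutset from $0$ with $|F|\asymp R^2$, while $\mathrm{Cap}(F)\asymp R$. So $\mathbb{P}^X_f(T_F^+=\infty)$ is $O(1/R)$ on average over $f\in F$. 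The same example refutes the Step~3 claim that harmonic measure from infinity gives each $f$ mass comparable to $\pi(f)/\mathrm{Cap}(F)$: those masses would sum to $\asymp R$, not $1$.

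Beyond this, the real content of the theorem is left open. You need a lower bound $e^{-Kn}$ on the probability that an $F$-independent random object (your walk traces) stays inside $A$ yet comes adjacent to every $f\in F$. Walks cannot be killed on $F$ without making the object depend on $F$, and nothing in the plan supplies a barrier on $F$ costing $O(1)$ per cutset vertex. Moreover, transition probabilities of $X$ carry factors $1/\max(\deg x,\deg y)$, which is exactly the hub problem you flag but do not resolve.

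Your counting framework ($\sum_F\mathbb{P}(\text{object determines }F)\le1$) is the right one and is how the paper concludes, but two ingredients that make the per-vertex cost $O(1)$ are missing.

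\emph{First,} replace walks by the GFF $\varphi$ of $X$. Since $G(v,v)=1/\mathbb{P}^X_v(\forall t\ge1:X_t\ne v)\le1/\varepsilon$ and conditional means stay in $[-2,5]$, forcing $\varphi\in[-2,-1]$ on $\Pi$ (the barrier) and $\varphi\in[4,5]$ on a set $Y$ costs at most a factor $q_\varepsilon$ per vertex, regardless of degrees.

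\emph{Second,} Proposition~\ref{harmonic_cover} provides $Y\ni o$ with $|Y|<4n/\varepsilon$ and $\mathbb{P}^X_u(T_Y<T_\Pi)\ge1/2$ for all $u\in A$. It is built greedily: each added vertex raises $\mathcal{C}(Y_i,\Pi)$ by at least $\varepsilon/4$, using the escape probability at that vertex, while $\mathcal{C}(Y_i,\Pi)\le\sum_{f\in\Pi}\pi(f)\le|\Pi|$. This is where $\pi\le1$ buys degree-independence. It enters as an \emph{upper} bound on the conductance out of the cutset, not as a lower bound on its capacity. So your instinct about the conductances is right, but aimed in the wrong direction.

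With both ingredients, the harmonic extension is $\ge1$ on $A$. The sign-expectation argument over the stopping set $\bar S$ then gives each $u\in A$ probability at least $2\Phi(\sqrt\varepsilon)-1$ of connecting to $Y$ in $\{\varphi\ge0\}\cap A$. Proposition~\ref{prop3}, FKG and Lemma~\ref{second_bound_pres} yield $\mathbb{P}(\partial^V_\infty C=\Pi)\ge K^{-n}$ for the cluster $C$ of $o$, and no hub-charging argument is needed.
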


The proof follows the same approach sketched in \cite[Section~9]{easoCountingMinimalCutsets2025}, which involves prescribing the exposed vertex boundary of a connected component of the Gaussian Free Field (GFF) on $G$, and lower bounding the probability of such an event, before counting over the set of minimal vertex cutsets. See Section~\ref{sec:preliminaries} for the required background on the GFF.

Let now $(Z_t)_{t\in \mathbb{N}_0}$ be the jump chain of $(X_t)_{t\in \mathbb{N}_0}$ and let $\mathbb{P}^Z$ be its law with corresponding expectation $\mathbb{E}^Z$. We note that, by decomposing on the first step, 
\begin{equation}\label{first_step}
\mathbb{P}^X_v(\forall t \geq 1: X_t\neq v) = \pi(v)\cdot \mathbb{P}^Z_v(\forall t \geq 1: Z_t\neq v) = \mathcal{C}(v\leftrightarrow  \infty),
\end{equation}
where $\mathcal{C}$ denotes the effective conductance on the network of $G$ with conductances as in \eqref{conductances}. As above, throughout this paper, the effective conductance $\mathcal{C}$ should always be interpreted this way. Hence, the assumption in Theorem~\ref{thm2_extend} is equivalent to $\mathcal{C}(v\leftrightarrow  \infty) \geq \varepsilon$.

As mentioned in the abstract, Theorem~\ref{thm2_extend} is an extension of \cite[Theorem~2]{easoCountingMinimalCutsets2025} (under stronger assumptions), where the authors obtain an analogous result for bond cutsets. They assume uniform transience of the simple random walk (SRW) on $G$, whereas by \eqref{first_step}, our assumption is uniform transience of the network of $G$ with conductances $c$. That is, they assume that there exists $\varepsilon > 0$ such that for all vertices $v\in V$,
\[
d_v \mathbb{P}^\mathrm{SRW}_v(\forall t \geq 1 : X_t \neq v) \geq \varepsilon.
\]

It turns out that the exact extensions of both theorems of \cite{easoCountingMinimalCutsets2025} to site percolation are false; one can construct counterexamples by modifying a rooted binary tree and replacing edges with gadgets that allow many choices for where to block (in constructing a cutset) one of two local routes, while keeping every vertex uniformly close to a supercritical, uniformly transient shortcut skeleton \cite{bassilCounterexamplesSitepercolationAnalogues2026}. The assumptions of our theorem are thus necessarily stronger; the exact reasons for which are elucidated in context in Remark~\ref{strenthened_assump}. Indeed, our assumptions upper bound the total conductance out of each vertex by $1$, whereas the assumptions of \cite{easoCountingMinimalCutsets2025} mean the total conductance out of each vertex is dependent on its degree. Qualitatively, the random walk $(X_t)_{t\in \mathbb{N}_0}$ is more likely to avoid high-degree vertices, whereas the SRW on $G$ has no such bias. Despite these strengthened assumptions, our theorem suffices for the purpose of our first corollary. To state it, we must first define what we mean by site percolation and its critical parameter.

Let $\mathbb{P}_p$ denote Bernoulli site percolation of parameter $p \in [0,1]$ on $G$, where each vertex is open with probability $p$ independently of the other vertices. An edge is present in a percolation configuration if both of its endpoints are open. For a vertex $v$ and percolation configuration $\omega$, we write $C_v(\omega)$ for the open cluster of $v$ in $\omega$, or just $C_v$ when the configuration is understood. 

Consider the percolation probabilities $\theta_v(p) := \mathbb{P}_p(v \leftrightarrow \infty)$. Here, $\{v \leftrightarrow \infty\}$ denotes the event that $v$ belongs to an infinite open connected component. We may also write $\{v \stackrel{A}{\leftrightarrow} u\}$ to denote the event that there exists a path of open vertices in a subset $A\subset V$ between $u$ and $v$. The critical percolation parameter is defined as
\[
p_c:=p_c(G) := \inf\{p \in [0,1] : \theta_v(p) > 0\},
\]
where we recall that for any $u,v\in V$, $\theta_u(p) > 0 \iff \theta_v(p) > 0$ so this definition is independent of $v$. 

The following corollary, of independent yet significant interest, extends results by Panagiotis and Severo \cite{panagiotisGap1Percolation2023} who prove the gap-at-1 phenomenon for site percolation on Cayley graphs of superlinear growth (see Theorem~\ref{cayley_gap} below for a restatement). Their proof utilises a comparison between Bernoulli percolation and the GFF, and decomposes the GFF into independent finite-range fields before integrating each one out and compensating by a slight increase in the percolation parameter. Our proof side-steps the Cayley-graph-specific machinery and instead follows the idea of \cite[Section~2, Note~6]{easoCountingMinimalCutsets2025} which is outlined below. During the final write-up of the present paper, this corollary was also proved independently by \cite{hartarskyWeightedIsoperimetryImplies2026} via a different method - studying bond percolation on weighted graphs. It follows immediately from their Theorem~1.7.

\begin{corollary}[Gap at 1 for site percolation on vertex transitive graphs]\label{gapat1}
There exists a universal constant $\varepsilon_1>0$ such that $p_c=1$ or $p_c\leq 1-\varepsilon_1$ for any infinite, connected, locally finite, vertex-transitive graph $G$.
\end{corollary}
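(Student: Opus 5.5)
We will reduce Corollary~\ref{gapat1} to a case distinction on growth, using Theorem~\ref{thm2_extend} in the superlinear-growth case together with a Peierls argument. Let $G$ be infinite, connected, locally finite and vertex-transitive, with common degree $d$. If $G$ has linear growth (quasi-isometric to $\mathbb{Z}$, equivalently two-ended), then $p_c=1$ for site percolation, since a finite set of vertices disconnects each end and that set is closed with positive probability infinitely often. It therefore remains to treat transitive graphs of superlinear growth and show that $p_c\le 1-\varepsilon_1$ there, with $\varepsilon_1$ independent of $G$.

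\textbf{Step 1 (uniform transience for the degree-weighted walk).} For transitive $G$ every edge has $c(x,y)=1/d$, so $(X_t)$ is the lazy walk with holding probability $0$ up to the factor $\pi\equiv 1$; in fact it is exactly the simple random walk, and $\mathcal{C}(v\leftrightarrow\infty)=\mathbb{P}^{\mathrm{SRW}}_v(\text{no return})$. We need a \emph{universal} $\varepsilon>0$ with $\mathbb{P}_v(\forall t\ge1:X_t\ne v)\ge\varepsilon$ for all superlinear-growth transitive graphs. Here we would invoke the quantitative structure theory for transitive graphs (as used in \cite{easoCountingMinimalCutsets2025}, Section~2, Note~6, and in \cite{panagiotisGap1Percolation2023}): a uniform isoperimetric inequality of dimension $>2$ at large scales, or more concretely the fact that transitive graphs of superlinear growth have uniformly bounded-below escape probability after passing to a suitable coarse structure. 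The degree dependence is the delicate point: the escape probability of SRW can tend to $0$ as $d\to\infty$ (e.g.\ $\mathbb{Z}^2$-like graphs with many local edges) — this is precisely why the paper's normalization $c=1/\max\deg$ helps, because effective conductance with unit total conductance per vertex is comparable, via Nash-Williams/Thomson, to a quantity controlled by the isoperimetric profile normalized by degree. I would prove $\mathcal{C}(v\leftrightarrow\infty)\ge\varepsilon$ via the isoperimetric-type sufficient condition mentioned in the abstract, fed by the Easo--Hutchcroft / Trofimov-type uniform isoperimetry for transitive graphs of superlinear growth (dimension at least $3$ on large scales, dimension-$2$ graphs handled separately).

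\textbf{Step 2 (graphs that are recurrent, i.e.\ growth of dimension $2$).} Since two-dimensional transitive graphs are recurrent, Theorem~\ref{thm2_extend} does not apply directly. Here I would follow Note~6: pass to a quotient/coarse graining or use the fact that such graphs are quasi-isometric to planar lattices with uniformly controlled constants, and obtain an exponential bound on minimal cutsets directly (e.g.\ via $*$-connectivity of minimal cutsets in bounded-scale coarse structure), or alternatively consider $G\times$ a finite structure. This step is the main obstacle; I would first try to show uniform exponential cutset counts by noting minimal vertex cutsets are connected in $G^{R}$ for a universal $R$ (coarse simple connectivity with universal constants), giving $\kappa^{\mathrm{site}}\le d^{CR}$ — but this depends on $d$, so a renormalization to bounded degree would be needed.

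\textbf{Step 3 (Peierls).} Given $\kappa^{\mathrm{site}}(G)\le K$ universal, $\mathbb{P}_p(v\not\leftrightarrow\infty)\le\sum_{n}\mathbb{P}_p(\exists F\in\mathcal{Q}^{\mathrm{site}}_n(v)\text{ closed})\le\sum_{n\ge1}K^n(1-p)^n<1$ once $1-p<1/(2K+1)$, since $v$ fails to percolate iff some minimal cutset is entirely closed. Hence $\theta_v(p)>0$ and $p_c\le 1-\varepsilon_1$ with $\varepsilon_1=1/(2K+1)$.
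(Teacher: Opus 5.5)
\textbf{Gap 1: the recurrent case.} Your Step~2 is a genuine gap, as you yourself flag. For recurrent transitive graphs of quadratic growth you offer only candidate strategies ($*$-connectivity, coarse simple connectivity, renormalisation), and their constants depend on the degree, so no universal bound comes out. Relatedly, the natural case split is recurrent versus transient, not linear versus superlinear. Step~1 as written asks for a universal escape bound on all superlinear-growth graphs, which fails for recurrent ones such as $\mathbb{Z}^2$.

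The missing idea is that in the recurrent case you should not count cutsets at all. Recurrence forces at most quadratic growth. The structure argument in the proof of \cite[Theorem~1.7]{hutchcroftNontrivialityPhaseTransition2024} then produces $H\le\mathrm{Aut}(G)$ with $G/H\cong\mathrm{Cay}(\Gamma,S)/F$, where $\mathrm{Cay}(\Gamma,S)$ has superlinear growth and $|F|\le M$ for a universal $M$. Theorem~\ref{cayley_gap} gives $p_c(\mathrm{Cay}(\Gamma,S))\le 1-\varepsilon$ uniformly over all groups and generating sets; this is exactly what removes the degree dependence you were fighting. Lemma~\ref{groups}, applied to the increasing event of having infinitely many open sites, then gives $p_c(G/H)\le 1-\varepsilon^M$. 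Finally, Theorem~\ref{benj} gives $p_c(G)\le p_c(G/H)$.

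\textbf{Gap 2: the transient case.} Your route here, proving uniform transience via the isoperimetric criterion of Corollary~\ref{dim2}, cannot produce a universal $\varepsilon$. Take the Cartesian product of $\mathbb{Z}^3$ with a complete graph on $m$ vertices. It is transitive and transient of degree $m+5$. A single fibre $F$ has $|F|_\pi=m$ but $|\partial^E F|_c=6m/(m+5)<6$. Hence any $\beta$ for which $|\partial^E K|_c\ge\beta|K|_\pi^{(d-1)/d}$ holds for all finite $K$, with $d>2$, satisfies $\beta<6m^{-(d-1)/d}\to 0$, even though the escape probability stays bounded away from $0$.

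The input you need is the theorem of Tessera--Tointon \cite[Corollary~1.3]{tesseraSharpRelationsVolume2024}: simple random walk on every transient vertex-transitive graph escapes with probability at least a universal $c>0$. So the degeneration with degree that you worry about does not occur among transient transitive graphs. Since $(X_t)$ is exactly simple random walk on a transitive graph, Theorem~\ref{thm2_extend} then gives $\kappa^{\mathrm{site}}(G)\le K(c)$ directly, and your Step~3 finishes.

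\textbf{Minor correction in Step~3.} The event $\{v\not\leftrightarrow\infty\}$ also contains $\{v\text{ closed}\}$, which no closed minimal cutset witnesses. Your bound therefore needs an extra $1-p$ term, as in the paper; this is harmless. Your linear-growth argument is fine.
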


The proof of this result is split into the cases of whether $G$ is recurrent or transient. The former case follows the proof of \cite[Theorem~1.7]{hutchcroftNontrivialityPhaseTransition2024} and uses the analogous gap-at-1 result for Cayley graphs \cite[Theorem~1.1]{panagiotisGap1Percolation2023}; the latter case follows the proof of Theorem~\ref{thm2_extend} and the classical Peierls union bound argument.

A more concise way to state the assumptions of many theorems involving (uniform) transience, and indeed, Theorem~\ref{thm2_extend}, is in terms of the isoperimetric dimension, the definition of which we recall now. We define the isoperimetric dimension of the graph $G=(V,E)$ with conductances $c$ by
\begin{equation} \label{isodim}
\mathrm{Dim}(G;c) := \sup \left\{ d \geq 1 : \inf_{\substack{K \subset V \\ 0 < |K| < \infty}} \frac{|\partial^E K|_{c}}{|K|_\pi^{(d-1)/d}} > 0 \right\},
\end{equation}
where $|\partial^E K|_{c}$ is the sum of the conductances of the edges in the edge-boundary of $K$ and $|K|_\pi$ is the sum of the $\pi$-measure of the vertices in $K$. It is well-known that an isoperimetric dimension greater than 2 implies uniform transience, so the following corollary is to be expected.

\begin{corollary}\label{dim2}
Let $G$ be an infinite, connected, locally finite graph with conductances $c$ as in \eqref{conductances}. Then
\[
\mathrm{Dim}(G;c)>2 \implies \kappa^{\mathrm{site}}(G) < \infty.
\]
In particular, if there exist $\beta > 0$ and $d>2$ such that $|\partial^E K|_c \geq \beta |K|_\pi^{(d-1)/d}$ for all finite $K\subset V$, then there exists $M:=M(\beta,d)$ such that $\kappa^{\mathrm{site}}(G) \leq M < \infty$.
\end{corollary}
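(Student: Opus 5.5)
The plan is to reduce Corollary~\ref{dim2} to Theorem~\ref{thm2_extend}. It suffices to show that the isoperimetric inequality $|\partial^E K|_c \geq \beta |K|_\pi^{(d-1)/d}$ with $d>2$ gives a lower bound $\mathcal{C}(v\leftrightarrow\infty)\geq \varepsilon(\beta,d)>0$ that is uniform in $v$. By \eqref{first_step}, this is exactly the hypothesis of Theorem~\ref{thm2_extend}, so we may take $M(\beta,d):=K(\varepsilon(\beta,d))$. The first statement follows from the second, since $\mathrm{Dim}(G;c)>2$ means that such an inequality holds for some $d>2$ and some $\beta>0$.

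To bound the effective conductance uniformly, I would use the standard route from isoperimetry to transience, following the Thomassen / Lyons--Peres argument via a flow or via the Dirichlet principle.

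\textbf{Step 1.} Pass from the isoperimetric inequality to a Faber--Krahn, or equivalently Sobolev, inequality for the network. Apply the co-area formula to level sets of $|f|$ for finitely supported $f$. This gives
\[
\sum_{x\sim y} c(x,y)|f(x)-f(y)| \geq \beta \|f\|_{\ell^{d/(d-1)}(\pi)},
\]
and hence the $L^2$ Sobolev inequality $\mathcal{E}(f)\geq c_1(\beta,d)\|f\|_{\ell^{2d/(d-2)}(\pi)}^2$, obtained by applying the former to $|f|^{2(d-1)/(d-2)}$ and using Cauchy--Schwarz. The Cauchy--Schwarz step uses that $\pi\le 1$ is bounded; it is also needed to compare $\sum c|f(x)^2-f(y)^2|$ with the Dirichlet energy.

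\textbf{Step 2.} Bound the effective resistance. We have
\[
\mathcal{C}(v\leftrightarrow\infty)=\inf\{\mathcal{E}(f): f(v)=1,\ f \text{ finitely supported}\}.
\]
For any such $f$, the Sobolev inequality gives
\[
\mathcal{E}(f)\geq c_1\|f\|_{2d/(d-2)}^2\geq c_1 \pi(v)^{(d-2)/d}f(v)^2 .
\]
This bound alone degenerates because $\pi(v)$ can be small: $\pi(v)\geq 1/\max_{y\sim v}\deg(y)$ is not bounded below. So this step must be done carefully. Note, however, that $\pi(v)=\sum_{y\sim v}1/\max(\deg v,\deg y)$. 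A small $\pi(v)$ forces $v$ to have neighbours of much larger degree. Even then, the isoperimetric inequality applied to $K=\{v\}$ gives $\pi(v)=|\partial^E\{v\}|_c\geq \beta\pi(v)^{(d-1)/d}$, hence $\pi(v)\geq \beta^d$. This resolves the degeneracy neatly: $\pi$ is uniformly bounded below by $\beta^d$, and so $\mathcal{C}(v\leftrightarrow\infty)\geq c_1\beta^{d-2}=:\varepsilon(\beta,d)$.

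Alternatively, one can avoid the Sobolev machinery with a flow construction. Grow sets $K_0=\{v\}\subset K_1\subset\cdots$ whose $\pi$-volumes grow geometrically, use max-flow/min-cut with the isoperimetric bound to route flow of size of order $|K_j|_\pi^{(d-1)/d}$ between shells, and sum energies as a convergent geometric series since $d>2$. I expect the main obstacle to be the co-area and Cauchy--Schwarz bookkeeping in Step 1 with the weighted measures $c$ and $\pi$, in particular checking that the constants depend only on $(\beta,d)$. The bounds $\pi\le1$ and $\pi\ge\beta^d$ should make all the weighted comparisons uniform.
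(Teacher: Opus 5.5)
Your proposal is correct, but it reaches the key estimate $\mathcal{C}(v\leftrightarrow\infty)\geq\varepsilon(\beta,d)$ by a different route from the paper. The surrounding reduction is the same as the paper's: choose $d>2$ and $\beta>0$ from $\mathrm{Dim}(G;c)>2$, then apply \eqref{first_step} and Theorem~\ref{thm2_extend}.

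The paper uses Theorem~\ref{lp6.41} as a black box. It notes that the anchored profile dominates the unanchored one, so $\varphi(G,\{a\},t;c)\geq f(t):=\beta t^{(d-1)/d}$. It then checks the hypotheses $|V(G)|_\pi=\infty$, $0<f(t)\le t$ on $[\pi(a),\infty)$ and $f(2t)=2^{(d-1)/d}f(t)$, and integrates to get $\mathscr{R}(a\leftrightarrow\infty)\le\int_{\beta^d}^\infty 4\alpha^2\beta^{-2}t^{-2(d-1)/d}\,dt<\infty$.

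You instead derive the Sobolev inequality $\mathcal{E}(f)\geq c_1\|f\|^2_{\ell^{2d/(d-2)}(\pi)}$ by hand. You use co-area together with the layer-cake/Minkowski bound for the $\ell^1$ version, then substitute $|f|^{2(d-1)/(d-2)}$, which makes both exponents equal to $2d/(d-2)$. You then feed a test function into the Dirichlet principle $\mathcal{C}(v\leftrightarrow\infty)=\inf\{\mathcal{E}(f):f(v)=1,\ f\text{ finitely supported}\}$.

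The decisive observation is common to both arguments. Without the singleton bound $\pi\geq\beta^d$, each only controls the conductance by something of order $\pi(v)^{(d-2)/d}$; in the paper this shows up as the lower limit $|A|_\pi=\pi(a)$ of the integral. With the singleton bound, both yield $\varepsilon(\beta,d)$ proportional to $\beta^d$.

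Your route is self-contained, avoids verifying the technical hypotheses of Theorem~\ref{lp6.41}, and passes through a functional inequality of independent use. The paper's route is shorter, and the theorem it cites needs only the anchored profile, a weaker hypothesis that the corollary does not exploit.

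One small correction: your Cauchy--Schwarz step does not use $\pi\le 1$. Since $\sum_y c(x,y)=\pi(x)$, one has $\sum_{\{x,y\}\in E}c(x,y)\big(|g(x)|+|g(y)|\big)^2\le 2\sum_x\pi(x)g(x)^2$ directly. The only uniform control on $\pi$ your argument needs is the lower bound $\pi\geq\beta^d$ in Step~2.
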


The isoperimetric inequality translates to a finite upper bound on effective resistance due to, for example, \cite[Theorem~6.41]{lyonsProbabilityTreesNetworks2017}, which by \eqref{first_step} is equivalent to the assumption of Theorem~\ref{thm2_extend}. The result follows from said theorem's proof.

\subsection*{Acknowledgements} The present paper, based heavily on \cite{easoCountingMinimalCutsets2025} in terms of both results and presentation, is a continuation of the author's undergraduate research project at the University of Bath. The author is very grateful to Dr Christoforos Panagiotis for supervising said project, and prompting the investigations into extensions of the results of \cite{easoCountingMinimalCutsets2025}.

\subsection*{AI Use Statement} A discussion with the LLM `ChatGPT-5.6 sol Pro' was developed into the proof of Proposition~\ref{harmonic_cover}. The LLM provided the initial idea to bound the length of the greedily constructed set via bounding the effective conductance increments.

\section{Background, definitions and notation}\label{sec:preliminaries}

In this section, we fix $G=(V,E)$ a locally finite, connected graph.

\subsection*{Vertex boundary} Given a set of vertices $S\subset V$, we define the vertex boundary $\partial^VS$ to be the set of vertices $\{u\in V\setminus S: u \text{ has a neighbour in } S \}$.

\subsection*{Exposed vertex boundary} For a finite, non-empty set of vertices $S\subset V$, we define its exposed vertex boundary $\partial_\infty^VS$ to be the set of vertices
\[
\{v\in\partial^VS: v \leftrightarrow \infty \text{ in } G\setminus (S\cup (\partial^VS\setminus \{v\})) \}.
\]

\subsection*{Ray} A ray is an infinite sequence of vertices in which each vertex appears at most once in the sequence and each two consecutive vertices in the sequence are the two endpoints of an edge in the graph. In the context of a percolation configuration, the use of this term implicitly assumes that all of the relevant vertices are open.

\subsection*{Increasing events and positive association} A percolation event $\mathcal{E}$, being a measurable subset of $\{0,1\}^E$, is called increasing if for all
percolation configurations $\omega, \xi\in \{0,1\}^E$ satisfying $\omega \leq \xi$
for the standard product (partial) ordering, we have
$\omega \in \mathcal{E} \implies \xi \in \mathcal{E}$. A percolation measure $\mathbb{P}$ is said to be positively associated if 
\[
\mathbb{P}(A \cap B) \geq \mathbb{P}(A)\mathbb{P}(B)
\]
for all increasing events $A,B$. This is often called the \textit{FKG-inequality}; required background can be found in \cite{grimmettPercolation1999}, for example.

\subsection*{Gaussian Free Field (GFF)} Assume now that $G = (V,E)$ is infinite, and consider it as the network $(V,E,c)$ with conductances as in \eqref{conductances}. As in \cite[Section~1.1]{berestyckiGaussianFreeField2025}, for $x,y \in V$ we write
\[
G(x,y) := \mathbb{E}^X_x\left(\sum_{t=0}^{\infty} \mathbf{1}_{\{X_t = y\}}\right)
\]
for the Green function associated to $(X_t)_{t\in \mathbb{N}_0}$, which is finite whenever $(X_t)_{t\in \mathbb{N}_0}$ is transient. Moreover, for $A \subseteq V$ and a vertex $x \in V$, write
\[
T_A := \inf\{t \geq 0 : X_t \in A\}, \qquad T_x^+ := \inf\{t \geq 1 : X_t = x\}
\]
for the hitting time of $A$ and the first return time to $x$, respectively. Then, for $\Lambda \subset V$, we write
\[
G_\Lambda(x,y) := \mathbb{E}^X_x\left(\sum_{t < T_\Lambda} \mathbf{1}_{\{X_t = y\}}\right)
\]
for the Green function associated to $(X_t)_{t\in \mathbb{N}_0}$ killed upon entering $\Lambda$. Both kernels are non-negative and killing the walk can only decrease them, so
\begin{equation}\label{eq:greenkilled}
0 \le G_\Lambda(x,y) \leq G(x,y),
\end{equation}
for all $x,y \in V$. In addition, $G_\Lambda(x,x) \ge 1$ for every $x \notin \Lambda$, by the contribution of the term $t = 0$.

Suppose now that $(X_t)_{t\in \mathbb{N}_0}$ is transient. The (centred) Gaussian Free Field on $G$ is the centred Gaussian field $\varphi = (\varphi(x))_{x \in V}$ with covariance $G(x,y)$ for $x,y \in V$. Given $\Lambda \subset V$, the GFF $\varphi_{\Lambda}$ on $V \setminus \Lambda$ with zero boundary conditions on $\Lambda$ is the (centred) Gaussian field with covariance $G_\Lambda$. By \cite{pittPositivelyCorrelatedNormal1982}, the GFF restricted to a finite set of vertices is positively associated.

\section{Preliminaries}\label{exp_bound_con_ineq}
In this section, we fix $G=(V,E)$ an infinite, locally finite, connected graph. The first two lemmas we require provide sufficient conditions to prescribe an exposed vertex boundary to be a chosen minimal vertex cutset. This is similar in presentation to Lemmas~1 and 2 in \cite{easoCountingMinimalCutsets2025}, although slightly different due to exposed boundaries not being defined exactly analogously.

\begin{lemma}\label{first_bound_pres}
Let $\Pi$ be a minimal vertex cutset separating a vertex $o$ from $\infty$, and let $A$ be the connected component of $o$ in $G \setminus \Pi$. Then
$$\partial^VA = \Pi.$$
Moreover, for every $\pi \in \Pi$ there exists a ray $(v_0, v_1, \ldots)$, with $v_0 = \pi$, such that
$$v_j \notin A \cup \Pi \qquad \forall j \geq 1.$$
\end{lemma}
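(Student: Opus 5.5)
Both parts follow from minimality of $\Pi$, applied to one vertex $\pi\in\Pi$ at a time. Let $A$ be the component of $o$ in $G\setminus\Pi$; it is finite because $\Pi$ is a cutset.

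\emph{Inclusion $\partial^V A\subseteq\Pi$.} Take $u\in\partial^V A$. Then $u\notin A$ and $u$ has a neighbour in $A$. If $u\notin\Pi$, then $u$ would be a vertex of $G\setminus\Pi$ adjacent to $A$, so it would lie in $A$, a contradiction. Hence $u\in\Pi$.

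\emph{Inclusion $\Pi\subseteq\partial^V A$ and the ray.} Fix $\pi\in\Pi$. By minimality, $\Pi\setminus\{\pi\}$ is not a cutset. So the component $A'$ of $o$ in $G\setminus(\Pi\setminus\{\pi\})$ is infinite.

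First, $A'$ contains $\pi$. Otherwise $A'$ would be a connected subset of $G\setminus\Pi$ containing $o$, so $A'\subseteq A$, contradicting finiteness of $A$.

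Next, $A'\setminus\{\pi\}$ splits into components of $G\setminus\Pi$, one of which is $A$. Every component of $A'\setminus\{\pi\}$ other than $A$ is joined to the rest of $A'$ only through $\pi$. Since $A'$ is connected and $A'\neq A$, at least one path inside $A'$ from $o$ leaves $A$. Such a path can exit $A$ only through $\pi$, because any other exit vertex would lie in $G\setminus\Pi$ and hence in $A$. Therefore $\pi$ has a neighbour in $A$, so $\pi\in\partial^V A$. This gives $\partial^V A=\Pi$.

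For the ray, use that $A'$ is connected, infinite and locally finite. $A$ is finite and $\pi$ has finite degree, so $A'\setminus(A\cup\{\pi\})$ is a union of finitely many components of $G\setminus\Pi$, each adjacent to $\pi$. At least one of them, call it $B$, must be infinite. $B$ is connected, infinite and locally finite, so König's lemma gives a ray $(v_1,v_2,\ldots)$ in $B$ starting at a neighbour $v_1$ of $\pi$. Prepending $v_0=\pi$ gives a ray with $v_j\in B$ for all $j\ge1$. These $v_j$ avoid $A$ because $B\neq A$, and they avoid $\Pi$ because $B\subseteq G\setminus\Pi$.

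The main obstacle is the bookkeeping in the last step. One must check that every component of $A'\setminus\{\pi\}$ other than $A$ is adjacent to $\pi$, and that only finitely many such components occur. Both follow from local finiteness together with the fact that $\Pi\setminus\{\pi\}$ is removed throughout.
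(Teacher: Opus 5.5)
Your proof is correct. The inclusion $\partial^V A\subseteq\Pi$ is the paper's argument. So is the adjacency of $\pi$ to $A$: you use a path in $A'$ from $o$ that must exit $A$ through $\pi$, where the paper uses a ray from $o$.

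For the ray itself you take a different route. The paper takes a single ray from $o$ inside the infinite, locally finite component $A'$ and keeps its tail from the visit to $\pi$ onwards. That tail avoids $\Pi\setminus\{\pi\}$ and never returns to $\pi$. It also cannot enter $A$: having entered the finite set $A$, it would have to leave it again through $\partial^V A\setminus\{\pi\}\subseteq\Pi\setminus\{\pi\}$.

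You instead decompose $A'\setminus\{\pi\}$ into components of $G\setminus\Pi$, each attached to the rest of $A'$ only at $\pi$. You use $\deg(\pi)<\infty$ to see there are finitely many, pick an infinite one $B\neq A$ by pigeonhole, and apply K\"onig's lemma inside $B$.

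The paper's version is shorter. One ray serves both for $\Pi\subseteq\partial^V A$ and for the tail, and no components need counting. Yours involves more bookkeeping, but it isolates the structural fact behind the lemma: each $\pi\in\Pi$ is adjacent both to $A$ and to an infinite component of $G\setminus\Pi$ other than $A$.

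A minor correction to your closing remark: each such component is adjacent to $\pi$ because $A'$ is connected, not because of local finiteness. Local finiteness at $\pi$ is what makes the number of components finite.
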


\begin{proof}
If $v \in \partial^VA \setminus \Pi$, then $v$ is present in $G \setminus \Pi$ and adjacent to $A$, contradicting that $A$ is a connected component. Hence $\partial^VA \subseteq \Pi$.

Now fix $\pi \in \Pi$. By minimality, the component of $o$ in $G \setminus (\Pi \setminus \{\pi\})$ is infinite and therefore contains a ray from $o$. This ray avoids $\Pi \setminus \{\pi\}$ and, since $A$ is finite, must leave $A$ through $\pi$. Thus $\pi \in \partial^VA$, proving the remaining inclusion and thus that $\partial^VA = \Pi$.

Take the tail of the ray, beginning at the first visit to $\pi$. It avoids $\Pi \setminus \{\pi\}$. If it entered $A$ after its first visit to $\pi$, then, because $A$ is finite and vertices in a ray are not repeated, it would later have to leave $A$ through $\Pi \setminus \{\pi\}$. This is impossible.
\end{proof}

\begin{lemma}\label{second_bound_pres}
Let $\Pi$ be a minimal vertex cutset separating a vertex $o$ from $\infty$, and let $A$ be the connected component of $o$ in $G \setminus \Pi$. If $\varnothing \neq S \subseteq A$ and every $\pi \in \Pi$ has a neighbour in $S$, then
$$\partial_\infty^V S = \Pi.$$
\end{lemma}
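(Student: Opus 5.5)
We prove the two inclusions $\partial_\infty^V S\subseteq \Pi$ and $\Pi\subseteq\partial_\infty^V S$ separately, using Lemma~\ref{first_bound_pres} throughout. Recall that $\partial_\infty^V S$ consists of those $v\in\partial^V S$ that are joined to $\infty$ in $G\setminus(S\cup(\partial^V S\setminus\{v\}))$. We first record where $\partial^V S$ can lie. Since $S\subseteq A$, any neighbour of $S$ lies in $A\cup\partial^V A$, and $\partial^V A=\Pi$ by Lemma~\ref{first_bound_pres}. Hence $\partial^V S\subseteq A\cup\Pi$. Conversely, every $\pi\in\Pi$ has a neighbour in $S$ and $\Pi\cap A=\varnothing$, so $\Pi\subseteq\partial^V S$. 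Thus $\partial^V S=\Pi\cup B$ with $B:=\partial^V S\cap A$.

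\emph{Inclusion $\partial_\infty^V S\subseteq\Pi$.} Take $v\in B\subseteq A$. Every path from $v$ to $\infty$ must leave the finite set $A$, and the only exit from $A$ is through $\partial^V A=\Pi$. But $\Pi\subseteq\partial^V S\setminus\{v\}$, because $v\notin\Pi$. So all of $\Pi$ is deleted in $G\setminus(S\cup(\partial^V S\setminus\{v\}))$. The component of $v$ there is therefore contained in $A$, which is finite, and hence $v\notin\partial_\infty^V S$.

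\emph{Inclusion $\Pi\subseteq\partial_\infty^V S$.} Fix $\pi\in\Pi$. By the previous paragraph $\pi\in\partial^V S$. Lemma~\ref{first_bound_pres} gives a ray $(\pi,v_1,v_2,\dots)$ with $v_j\notin A\cup\Pi$ for all $j\ge1$. The removed set is $S\cup(\partial^V S\setminus\{\pi\})$, which is contained in $A\cup\Pi\setminus\{\pi\}$ (using $S\subseteq A$ and $\partial^V S\subseteq A\cup\Pi$). Hence the whole ray, including $\pi$ itself, survives in $G\setminus(S\cup(\partial^V S\setminus\{\pi\}))$. This shows $\pi\leftrightarrow\infty$ there, so $\pi\in\partial_\infty^V S$.

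There is no real obstacle here. The only point needing care is the bookkeeping fact $\partial^V S\subseteq A\cup\Pi$, which both directions rely on; it follows directly from $\partial^V A=\Pi$. Finiteness of $A$ is what rules out the interior boundary vertices $B$, and nonemptiness of $S$ is only needed so that $\partial_\infty^V S$ is defined.
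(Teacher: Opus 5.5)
Your proof is correct and follows essentially the same route as the paper's. You first establish $\Pi\subseteq\partial^V S\subseteq A\cup\Pi$ via $\partial^V A=\Pi$, then use the ray from Lemma~\ref{first_bound_pres} to show each $\pi\in\Pi$ is exposed, and use finiteness of $A$ together with $\Pi\subseteq\partial^V S\setminus\{v\}$ to rule out boundary vertices lying in $A$; only the order of the two inclusions differs.
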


\begin{proof}
The hypotheses and Lemma~\ref{first_bound_pres} give
$$\Pi \subseteq \partial^VS \subseteq A \cup \Pi.$$
Indeed, every $\pi \in \Pi$ has a neighbour in $S$ and $\pi \notin A \supset S$, giving the first inclusion. Moreover, if $v\notin S$ but $v\sim u$ for some $u \in S$ then either $v\in A$, or $v\notin A$ and since $u\in A$ also, $v\in \partial^VA = \Pi$ by Lemma~\ref{first_bound_pres}.

Now, fix $\pi \in \Pi$ and use the ray supplied by Lemma~\ref{first_bound_pres}. It avoids $A \cup (\Pi \setminus \{\pi\})$, hence avoids $S \cup (\partial^VS \setminus \{\pi\})$. Therefore $\pi \in \partial_\infty^V S$.

Conversely, let $v \in \partial^VS \setminus \Pi$ (this is enough, since $\partial_\infty^V S \subset \partial^V S$). Then $v \in A$. Every path from $v$ to infinity must leave $A$ through $\Pi$, but $\Pi \subseteq \partial^VS \setminus \{v\}$ so there cannot exist a path from $v$ to $\infty$ in $G\setminus (S\cup (\partial^VS\setminus \{v\}))$. Hence, by definition, $v \notin \partial_\infty^V S$.
\end{proof}

It is well-known that the GFF admits a Markov property, in the sense that one may condition on the values in a certain set of vertices and decompose the GFF as the sum of a GFF on the remaining vertices and a deterministic harmonic extension into said vertices. The following lemma states this for infinite, transient graphs, and it follows from the same result for finite graphs endowed with a killing measure \cite[Proposition~2.3]{Sznitman2012} and a well-known exhaustion argument.

\begin{lemma}[Markov property of the GFF]\label{markov}
Suppose that $(X_t)_{t\in \mathbb{N}_0}$ is transient and let $\varphi$ be the GFF on $G$ associated to $(X_t)_{t\in \mathbb{N}_0}$. Let $\Lambda \subset V$ be finite and non-empty. Then we may write
\begin{equation}\label{eq:markov}
\varphi = \varphi_{\Lambda} + h_\Lambda ,
\end{equation}
where $\varphi_{\Lambda}$ is a centred GFF on $V \setminus \Lambda$ with zero boundary conditions on $\Lambda$, which is independent of $\sigma(\varphi(v):v \in \Lambda)$. Moreover, $h_\Lambda$ is measurable with respect to $\sigma(\varphi(v) : v \in \Lambda)$ and is given by
\begin{equation}\label{eq:hK}
h_\Lambda(x) = \mathbb{E}^X_x\big(\varphi(X_{T_\Lambda}) \mathbf{1}_{\{T_\Lambda < \infty\}}\big) = \sum_{v \in \Lambda} \mathbb{P}^X_x\big(T_\Lambda < \infty,\, X_{T_\Lambda} = v\big)\, \varphi(v)
\end{equation}
for every $x \in V$ \footnote{Although it is easy to verify that this expression for $h_\Lambda(x)$ is indeed harmonic off $\Lambda$ and agrees with $\varphi$ on $\Lambda$, such an expression need not be unique. For example, on $\mathbb{Z}^3$ with $\Lambda = \{0\}$ and prescribed value $1$ at the origin, both $h \equiv 1$ and $h(x) = \mathbb{P}_x(T_0 < \infty)$ are harmonic off $\Lambda$ and equal to $1$ on $\Lambda$.}.
\end{lemma}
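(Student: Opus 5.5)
The plan is to prove Lemma~\ref{markov} by reduction to finite graphs with a killing measure, where \cite[Proposition~2.3]{Sznitman2012} applies directly, followed by an exhaustion argument. Since $(X_t)$ is transient, $G(x,y)<\infty$ for all $x,y$ and $\varphi$ is well defined. Working with covariances is easiest: for Gaussian fields, independence and equality in law reduce to covariance identities. So I will \emph{define}
\[
\varphi_\Lambda := \varphi - h_\Lambda, \qquad h_\Lambda(x) := \sum_{v\in\Lambda}\mathbb{P}^X_x(T_\Lambda<\infty,\,X_{T_\Lambda}=v)\,\varphi(v).
\]
This $h_\Lambda$ is a finite linear combination of $(\varphi(v))_{v\in\Lambda}$, so it is measurable with respect to $\sigma(\varphi(v):v\in\Lambda)$, and $(\varphi_\Lambda,\varphi|_\Lambda)$ is jointly Gaussian and centred. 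It then suffices to check two identities:
\[
\mathrm{Cov}(\varphi_\Lambda(x),\varphi(v))=0 \quad (v\in\Lambda),
\qquad
\mathrm{Cov}(\varphi_\Lambda(x),\varphi_\Lambda(y))=G_\Lambda(x,y).
\]
Together these give both the independence claim and the claim that $\varphi_\Lambda$ is a GFF with covariance $G_\Lambda$, vanishing on $\Lambda$.

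Both identities follow from the last-exit/first-entrance decomposition of the Green function,
\[
G(x,y)=G_\Lambda(x,y)+\sum_{v\in\Lambda}\mathbb{P}^X_x(T_\Lambda<\infty,\,X_{T_\Lambda}=v)\,G(v,y),
\]
which comes from the strong Markov property at $T_\Lambda$: split the visits to $y$ into those before $T_\Lambda$ and those after. Write $H(x,v)$ for the hitting distribution $\mathbb{P}^X_x(T_\Lambda<\infty,X_{T_\Lambda}=v)$.
\begin{enumerate}
\item Taking $y=u\in\Lambda$ gives $G_\Lambda(x,u)=0$, so $\mathrm{Cov}(\varphi(x)-h_\Lambda(x),\varphi(u))=G(x,u)-\sum_v H(x,v)G(v,u)=0$. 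This is the first identity.
\item Expanding $\mathrm{Cov}(\varphi(x)-h_\Lambda(x),\varphi(y)-h_\Lambda(y))$, the first identity kills the cross terms involving $h_\Lambda(y)$. What remains is $G(x,y)-\sum_v H(x,v)G(v,y)=G_\Lambda(x,y)$, which is the second identity.
\end{enumerate}
One also needs $G$ to be symmetric, which holds because the walk is reversible with respect to counting measure: $p(x,y)=c(x,y)=p(y,x)$. The harmonicity of $h_\Lambda$ off $\Lambda$, and $h_\Lambda=\varphi$ on $\Lambda$, follow from one-step analysis and are only remarks.

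The main technical point is justifying the strong Markov decomposition for the infinite transient walk, including the events $T_\Lambda=\infty$, together with the summability needed to interchange sums. If one prefers to follow the paper's suggested route instead, the plan is as follows.
\begin{enumerate}
\item Exhaust $V$ by finite $V_n\uparrow V$ containing $\Lambda$, and kill the walk on exiting $V_n$.
\item Apply \cite[Proposition~2.3]{Sznitman2012} on each $V_n$.
\item Pass to the limit, using monotone convergence $G_{V_n^c}\uparrow G$, $G_{\Lambda\cup V_n^c}\uparrow G_\Lambda$ and $H_n\uparrow H$.
\end{enumerate}
Convergence of covariances then gives convergence in law of the Gaussian fields. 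I expect the direct covariance computation to be cleaner, with the exhaustion used only to justify the Green-function identity.
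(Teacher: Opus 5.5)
Your proof is correct, and your main route is more direct than the paper's. The paper gives no argument beyond citing the finite-graph-with-killing version \cite[Proposition~2.3]{Sznitman2012} together with an exhaustion. You instead work on the infinite transient graph itself: setting $\varphi_\Lambda:=\varphi-h_\Lambda$ reduces both the independence and the identification of the law of $\varphi_\Lambda$ to the first-entrance decomposition of $G$. This is essentially Sznitman's own finite-volume proof carried over to infinite volume.

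You need no exhaustion even to justify that decomposition, so the ``main technical point'' you flag is routine:
\begin{itemize}
\item it is the strong Markov property at $T_\Lambda$ on $\{T_\Lambda<\infty\}$ (on $\{T_\Lambda=\infty\}$ every visit is already counted in $G_\Lambda$);
\item Tonelli justifies the interchange of sums, since every term is non-negative;
\item finiteness is exactly transience, and the sum over $v\in\Lambda$ is finite.
\end{itemize}

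Your route gives a self-contained infinite-volume argument with no limit to identify. It also explains why the specific extension \eqref{eq:hK}, with the indicator of $\{T_\Lambda<\infty\}$, is the right one despite the non-uniqueness noted in the footnote. Your first identity forces $h_\Lambda(x)=\mathbb{E}(\varphi(x)\mid\varphi|_\Lambda)$. By contrast, on $\mathbb{Z}^3$ the constant extension $h\equiv\varphi(0)$ would leave $\varphi(x)-\varphi(0)$ negatively correlated with $\varphi(0)$, because $G(x,0)<G(0,0)$ for $x\neq 0$.

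The paper's route buys only brevity by citation. Its price is the monotone limits $G_{V_n^c}\uparrow G$, $G_{\Lambda\cup V_n^c}\uparrow G_\Lambda$, $H_n\uparrow H$ that you list, plus a convergence-in-law step.
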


It is often the case that the finite set to which we wish to apply the Markov property is not deterministic but random. The following lemma allows us to do so given that the set is a \textit{stopping set}, the spatial analogue of a stopping time. It follows from Lemma~\ref{markov} and a standard argument, decomposing over the countably many possibilities of $\Lambda$.

\begin{lemma}[Strong Markov property of the GFF]\label{strong_markov}
Let $G$ and $\varphi$ be as in Lemma~\ref{markov}, and let $\Lambda \subset V$ be a finite, non-empty, possibly random set. Assume also that $\Lambda$ is a stopping set, in the sense that $\{\Lambda = \lambda\} \in \sigma(\varphi (v): v\in \lambda)$ for every finite $\lambda \subset V$. Then \eqref{eq:markov} and \eqref{eq:hK} hold conditionally on $\mathcal{H} = \sigma\big(\Lambda, \{\varphi(v) : v \in \Lambda\}\big)$.
\end{lemma}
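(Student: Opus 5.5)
We derive Lemma~\ref{strong_markov} from Lemma~\ref{markov} by decomposing over the countably many finite non-empty values $\lambda$ of $\Lambda$. It suffices to prove the following. For every finite non-empty $\lambda\subset V$, every bounded measurable functional $F$ on $\mathbb{R}^V$, and every bounded $\sigma(\varphi(v):v\in\lambda)$-measurable random variable $Y$,
\[
\mathbb{E}\big[F(\varphi)\,\mathbf{1}_{\{\Lambda=\lambda\}}\,Y\big]
=\mathbb{E}\big[\Psi_\lambda\big((\varphi(v))_{v\in\lambda}\big)\,\mathbf{1}_{\{\Lambda=\lambda\}}\,Y\big],
\qquad
\Psi_\lambda(a):=\mathbb{E}\big[F(\varphi_\lambda+h_\lambda^{a})\big].
\]
Here $h_\lambda^{a}(x)=\sum_{v\in\lambda}\mathbb{P}^X_x(T_\lambda<\infty,X_{T_\lambda}=v)\,a_v$ is the harmonic extension \eqref{eq:hK} of the boundary data $a\in\mathbb{R}^\lambda$, and $\varphi_\lambda$ is a GFF on $V\setminus\lambda$ with zero boundary conditions on $\lambda$.

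This identity says exactly the following. On $\{\Lambda=\lambda\}$, the conditional law of $\varphi$ given $\mathcal{H}$ is that of $\varphi_\lambda+h_\lambda$. Here $\varphi_\lambda$ is an independent zero-boundary GFF, and $h_\lambda$ is computed from the observed values on $\lambda$. This is the content of the claim that \eqref{eq:markov} and \eqref{eq:hK} hold conditionally on $\mathcal{H}$.

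The key steps are as follows.

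First, we check that $\mathcal{H}$ is generated by sets of the form $\{\Lambda=\lambda\}\cap B$ with $B\in\sigma(\varphi(v):v\in\lambda)$. Indeed, $\{\varphi(v)\mathbf{1}_{v\in\Lambda}\in\cdot\}$ decomposes over $\{\Lambda=\lambda\}$. Since $\Lambda$ takes values in a countable set, the displayed identity for each $\lambda$ then yields the conditional statement by summation.

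Second, we fix $\lambda$ and apply Lemma~\ref{markov} with this deterministic $\lambda$. This gives $\varphi=\varphi_\lambda+h_\lambda$, where $\varphi_\lambda$ is independent of $\mathcal{G}_\lambda:=\sigma(\varphi(v):v\in\lambda)$ and $h_\lambda$ is $\mathcal{G}_\lambda$-measurable. By the stopping-set hypothesis, $\mathbf{1}_{\{\Lambda=\lambda\}}Y$ is $\mathcal{G}_\lambda$-measurable. Conditioning on $\mathcal{G}_\lambda$ and using independence (the freezing lemma) gives $\mathbb{E}[F(\varphi)\mid\mathcal{G}_\lambda]=\Psi_\lambda((\varphi(v))_{v\in\lambda})$, and the identity follows.

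Third, we sum over $\lambda$. Since $\Lambda$ is finite and non-empty, the events $\{\Lambda=\lambda\}$ partition the probability space up to a null set. The conditional law given $\mathcal{H}$ is therefore the mixture that evaluates $\Psi_\Lambda$ at $(\varphi(v))_{v\in\Lambda}$. Equivalently, there is a representation $\varphi=\varphi_\Lambda+h_\Lambda$ in which, conditionally on $\mathcal{H}$, $\varphi_\Lambda$ is a zero-boundary GFF on $V\setminus\Lambda$ independent of $\mathcal{H}$, and $h_\Lambda$ is given by \eqref{eq:hK}. To obtain an explicit pathwise coupling, set $\varphi_\Lambda:=\varphi-h_\Lambda$ on each event $\{\Lambda=\lambda\}$.

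The only delicate point, which we expect to be the main obstacle, is measurability. We must verify that $\mathcal{H}$ is well defined, that $\mathcal{H}\cap\{\Lambda=\lambda\}=\mathcal{G}_\lambda\cap\{\Lambda=\lambda\}$, and that $\Psi_\lambda$ is jointly measurable in its argument. The first two follow directly from the stopping-set condition. The third follows from the linearity of $a\mapsto h_\lambda^{a}$ together with Fubini's theorem.
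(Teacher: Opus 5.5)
Your proposal is correct and is the argument the paper has in mind. The paper only says the lemma follows from Lemma~\ref{markov} by decomposing over the countably many values of $\Lambda$, and your three steps supply the details of that standard argument: the stopping-set condition makes $\mathbf{1}_{\{\Lambda=\lambda\}}Y$ measurable with respect to $\sigma(\varphi(v):v\in\lambda)$, the freezing lemma handles each fixed $\lambda$, and summation over $\lambda$ assembles the result.

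One small misattribution: the identity $\mathcal{H}\cap\{\Lambda=\lambda\}=\sigma(\varphi(v):v\in\lambda)\cap\{\Lambda=\lambda\}$ holds for any countably-valued $\Lambda$. The stopping-set hypothesis is actually used only in your second step.
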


In the proof of Theorem~\ref{thm2_extend}, we will make use of the following proposition which infers an exponential lower bound on the probability of a percolation cluster (of a chosen vertex) intersecting every vertex of a set which is `easy' to connect to.

\begin{proposition}[{\cite[Proposition~3]{easoCountingMinimalCutsets2025}}]\label{prop3}
Let $G'=(V',E')$ be a finite, connected graph. Let $\mathbb{P}$ be a positively associated percolation measure on $G'$. Let $B\subset V'$, let $\theta,p\in(0,1]$ and suppose that $\mathbb{P}(u\leftrightarrow B)\geq \theta$ for every $u\in V'$, and $\mathbb{P}(e \text{ is open})\geq p$ for every $e\in E'$. Then for every $o\in V'$,
\[
\mathbb{P}\left(\bigcap_{b\in B}\{o\leftrightarrow b\}\right) \geq c^{|B|},
\]
where $c:=\left(\dfrac{p\theta}{2}\right)^{3/\theta}$.
\end{proposition}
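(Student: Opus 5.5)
The plan is to exhibit $\bigcap_{b\in B}\{o\leftrightarrow b\}$ as (containing) an intersection of at most $3|B|/\theta$ increasing events, each of probability at least $p\theta/2$. Positive association then gives the bound $c^{|B|}$ with $c=(p\theta/2)^{3/\theta}$ directly. The basic building block is the following consequence of positive association. For an edge $e=\{u,w\}$, the events $\{e \text{ open}\}$ and $\{w\leftrightarrow B\}$ are both increasing, so
\[
\mathbb{P}(e \text{ open},\ w\leftrightarrow B)\ \ge\ p\,\theta .
\]
So a single step along an edge, followed by a connection to $B$, costs only a factor $p\theta$. The difficulty is that "connecting to $B$" lands at an uncontrolled point of $B$, not at a prescribed $b$.

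To handle this, I will argue by induction on $|B|$, via a strengthened statement over subsets $B'\subseteq B$ with the hypothesis $\mathbb{P}(u\leftrightarrow B)\ge\theta$ kept for the whole of $B$. For each $b\in B$, let $N(b)$ be the (random) number of points of $B$ in the open cluster of $b$.

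First, a dichotomy. Either a positive fraction of the mass $\sum_{b}\mathbb{P}(\cdot)$ comes from configurations where clusters of points of $B$ are large, or most $b$ are essentially isolated from the rest of $B$. In the isolated case, any vertex adjacent to such a $b$ that reaches $B$ must do so through $b$ with probability at least about $\theta/2$; this is where the factor $1/2$ enters. Thus $\mathbb{P}(u\leftrightarrow b)\ge \theta/2$ for a neighbour $u$ of $b$, and hence, by walking along a spanning path in the connected graph $G'$, we get lower bounds of the form $p\theta/2$ per step for the prescribed target $b$.

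In the large-cluster case, the probability that the cluster of $o$ (after one step and a connection to $B$) contains a set of size $\gtrsim \theta|B|/3$ is bounded below. Summing over which set is swallowed, by a pigeonhole/averaging argument, one can fix a single set $B_1$ with $|B_1|\ge \theta|B|/3$ swallowed at cost $\ge (p\theta/2)^{\text{const}}$. One then applies the induction hypothesis to $B\setminus B_1$. This uses FKG once more to glue, since all the connection events are increasing. The exponent $3/\theta$ should come out of balancing these two cases: each round costs $O(1)$ factors of $p\theta/2$ and removes a fraction $\ge\theta/3$ of the remaining targets, or a single target at cost $\le 3/\theta$ factors.

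The main obstacle is the averaging step in the large-cluster case. Summing over possible swallowed sets introduces a combinatorial factor that could destroy the exponential bound, and conditioning on the cluster of $o$ is not compatible with FKG. What I would try first is to avoid conditioning altogether. I would define the increasing events $E_b=\{o\leftrightarrow b\}$ and prove
\[
\sum_{b\in B}\mathbb{P}\Big(E_b\cap\bigcap_{b'\in S}E_{b'}\Big)\ \ge\ \tfrac{\theta}{3}\,\Big|B\setminus S\Big|\,(p\theta/2)^{3/\theta}\,\mathbb{P}\Big(\bigcap_{b'\in S}E_{b'}\Big)
\]
for every $S\subset B$, using the building-block inequality along a path from the cluster of $S$ to $B\setminus S$. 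From this I would then extract a single $b$ by a greedy choice, iterating $|B|$ times. This keeps everything an intersection of increasing events. If the sum-to-max step loses too much, I will fall back on the ordering in which targets are added along a fixed spanning tree of $G'$, so that each new target is reached from a neighbour of the current cluster.
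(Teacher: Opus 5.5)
\textbf{There is a genuine gap.} Your opening reduction is right: it suffices to find at most $3|B|/\theta$ increasing events, each of probability at least $p\theta/2$, whose intersection forces $B$ into the cluster of $o$, and then apply positive association. The events you go on to build do not exist in general, because you try to pay a bounded price per \emph{target} $b\in B$.

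\emph{The per-target inequality is false.} Your intermediate inequality already fails at $S=\varnothing$. Take independent bond percolation with $p=0.9$ on the binary tree of depth $d$ rooted at $o$, with $B$ its leaves. Every vertex reaches $B$ with probability at least the survival probability of a supercritical branching process, so $\theta$ is bounded below uniformly in $d$. Yet $\mathbb{P}(o\leftrightarrow b)=p^d$ for every leaf. So the left-hand side is $|B|p^d$, far below $\frac{\theta}{3}|B|c$ once $d$ is large. The first target genuinely costs a factor polynomially small in $|B|$; the bound survives only through amortization over later targets.

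\emph{The large-cluster branch fails for the reason you suspected.} Averaging over which set is swallowed loses a factor exponential in its size. In the star with centre $o$ and leaves $B$ (independent edges, probability $p$) the hypotheses hold with $\theta=p$. But any $m$ prescribed leaves lie in the cluster of $o$ with probability exactly $p^m$, so no fixed set of $\theta|B|/3$ targets can be swallowed at cost $(p\theta/2)^{O(1)}$.

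\emph{The fallback has no length control.} Walking along a spanning path or spanning tree costs a factor $p$ per edge, and nothing bounds the number of edges in terms of $|B|$.

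\textbf{Missing idea.} You need a chain through \emph{auxiliary} vertices, with a stopping rule and a potential function. This is the `chained sequence' the paper mentions after Proposition~\ref{harmonic_cover}, whose proof follows the same template.
\begin{itemize}
\item \emph{Construction.} Put $S_0=\{o\}$. While some vertex reaches $S_i$ with probability less than $\theta/2$, connectivity of $G'$ gives an edge $\{u,v\}$ with $\mathbb{P}(u\leftrightarrow S_i)\ge\theta/2>\mathbb{P}(v\leftrightarrow S_i)$. Set $v_{i+1}:=v$ and $S_{i+1}:=S_i\cup\{v\}$.
\item \emph{Cost per step.} Your building block, aimed at $S_i$ rather than $B$, gives $\mathbb{P}(v_{i+1}\leftrightarrow S_i)\ge\mathbb{P}(\{u,v\}\text{ open},\,u\leftrightarrow S_i)\ge p\theta/2$.
\item \emph{Number of steps.} The potential $\Phi(S):=\sum_{b\in B}\mathbb{P}(b\leftrightarrow S)\le|B|$ increases at each step by $\sum_{b}\mathbb{P}(b\leftrightarrow v,\,b\not\leftrightarrow S_i)\ge\mathbb{P}(v\leftrightarrow B)-\mathbb{P}(v\leftrightarrow S_i)>\theta/2$. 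Hence the procedure stops after $k<2|B|/\theta$ steps, and then $\mathbb{P}(b\leftrightarrow S_k)\ge\theta/2$ for every $b\in B$.
\item \emph{Conclusion.} The increasing events $\{v_i\leftrightarrow S_{i-1}\}$, $1\le i\le k$, and $\{b\leftrightarrow S_k\}$, $b\in B$, together force all of $B$ into the cluster of $o$. Positive association gives probability at least $(p\theta/2)^{2|B|/\theta}(\theta/2)^{|B|}\ge(p\theta/2)^{3|B|/\theta}$.
\end{itemize}
So the factor $1/2$ comes from splitting $\theta$ between reaching $S_i$ and reaching $B$ while avoiding $S_i$. The exponent $3/\theta$ is $2/\theta$ chain steps plus $|B|\le|B|/\theta$ final connections, not a balance between two cases.
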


The following results concerning Cayley and quotient graphs are used in the proof of the recurrent case of Corollary~\ref{gapat1}; we include the proof of one as it contains a slight modification to the original hypotheses, in that we consider site percolation instead of bond percolation.

\begin{theorem}[{\cite[Theorem~1.1]{panagiotisGap1Percolation2023}}]\label{cayley_gap}
There is a universal constant $\varepsilon>0$ such that $p_c(G')\leq1-\varepsilon$ for site percolation on any Cayley graph $G'$ of superlinear growth.
\end{theorem}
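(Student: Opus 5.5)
This statement is quoted from \cite[Theorem~1.1]{panagiotisGap1Percolation2023}, so the plan is first to check that the cited result is already formulated for \emph{site} percolation, and then to invoke it directly. The direction of comparison matters. For any graph, $p_c^{\mathrm{bond}}\leq p_c^{\mathrm{site}}$, so a site gap implies the bond gap but not conversely. If the source stated only the bond version, we could not simply transfer it. I expect, without having verified it in the source, that Panagiotis and Severo work with site percolation (the stronger statement), in which case nothing further is needed. Since no proof of this theorem is needed in the present paper beyond the citation, the remaining paragraphs only indicate how one would re-derive it if a self-contained argument were wanted.

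\emph{Step 1: reduction by growth.} By Gromov's theorem together with growth dichotomies for groups, a Cayley graph $G'$ of superlinear growth either has at least cubic growth, or is quasi-isometric to $\mathbb{Z}^2$ (the group is virtually $\mathbb{Z}^2$). In the cubic-growth case, the Coulhon--Saloff-Coste isoperimetric inequality gives $|\partial^E K|\geq \beta |K|^{2/3}$ with $\beta$ depending only on the degree. Hence simple random walk on $G'$ is uniformly transient, with escape probability bounded below.

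\emph{Step 2: GFF comparison.} Next one compares Bernoulli site percolation with the excursion set $\{\varphi\geq -h\}$ of the GFF. The approach of \cite{panagiotisGap1Percolation2023} is to write $\varphi$ as a sum of independent finite-range fields and integrate these out one at a time. At each stage, conditioning on the remaining field costs only a slight increase in the percolation parameter. This shows that $\{\varphi\geq -h\}$ is stochastically dominated by Bernoulli site percolation with parameter $1-\varepsilon$, and it remains to show that this excursion set percolates. In the transient case, percolation of $\{\varphi\geq -h\}$ follows from a Peierls argument for cutsets of the GFF. The virtually-$\mathbb{Z}^2$ case needs a separate, planar-type argument.

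\emph{Main obstacle.} The hard step is uniformity. The constant $\varepsilon$ must not depend on the degree of the Cayley graph, yet every isoperimetric constant above degrades as the generating set grows. The first thing I would try is to pass to a well-chosen finite generating set, or to a subgraph of bounded degree, that still has superlinear growth. Because $p_c^{\mathrm{site}}$ is monotone under taking subgraphs, a bound on such a subgraph transfers to $G'$. This reduces the problem to finitely many growth regimes with controlled constants. This uniformity issue is exactly what the cited paper resolves, which is why the present paper uses the result as a black box.
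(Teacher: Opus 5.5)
Your proposal matches the paper: the paper gives no proof of this result and imports it as a black box from \cite[Theorem~1.1]{panagiotisGap1Percolation2023}. Its introduction confirms that Panagiotis and Severo state the gap for \emph{site} percolation, so your check on the direction of $p_c^{\mathrm{bond}}\leq p_c^{\mathrm{site}}$ goes through and the citation alone suffices. Your re-derivation sketch is therefore unnecessary, and it is not reliable as written: for example, turning the Coulhon--Saloff-Coste inequality into $|\partial^E K|\geq\beta|K|^{2/3}$ requires a bound $|B(r)|\geq c\,r^3$ for all $r$, and making $c$ depend only on the degree is a nontrivial structural input that does not follow from that inequality. The sketch can simply be dropped.
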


Let $H\leq\mathrm{Aut}(G)$. For $v\in V$, write $Hv:=\{hv : h\in H\}$ for the orbit of $v$ under $H$. The quotient graph $G/H$ is the simple graph with vertex set $V/H:=\{Hv : v\in V\}$, in which two distinct orbits $Hv, Hv'$ are joined by an edge whenever some representatives $x\in Hv$ and $y\in Hv'$ satisfy $x\sim y$ in $G$.

\begin{theorem}[{\cite[Theorem~1]{benjaminiPercolation$Z^d$Many1996}}]\label{benj}
Let $H\leq\mathrm{Aut}(G)$. Then $p_c(G) \leq p_c(G/H)$ for site percolation.
\end{theorem}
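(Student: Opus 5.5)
The proof is a coupling argument: project site percolation on $G$ down to site percolation on $G/H$ through a breadth-first exploration that always reveals fresh vertices of $G$. The aim is a coupling of $\omega\sim\mathbb{P}_p$ on $G$ with $\eta\sim\mathbb{P}_p$ on $G/H$ such that whenever the open cluster of $Ho$ in $\eta$ is infinite, the open cluster of $o$ in $\omega$ is infinite. Taking $p>p_c(G/H)$ then gives $\theta_o(p)>0$ on $G$, hence $p_c(G)\le p_c(G/H)$.

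\textbf{The exploration.} Fix a root $o\in V$ and an enumeration of the vertices of $G/H$. We explore the cluster of $Ho$ in $G/H$ while maintaining a partial injective lift: each explored orbit that has been found open is assigned a representative vertex of $G$ in that orbit, with the root $Ho$ assigned $o$. At each step we pick the earliest boundary edge $\{Hx,Hy\}$ of the explored open set, where $Hx$ is already explored and open and $Hy$ is unexplored. Since $Hx\sim Hy$ in $G/H$, some $x'\in Hx$ is adjacent to some $y'\in Hy$. Applying an element of $H$ carrying $x'$ to the representative $\tilde x$ of $Hx$, we find a neighbour $\tilde y$ of $\tilde x$ in $G$ lying in the orbit $Hy$. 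The state of $Hy$ in $\eta$ is then declared to be $\omega(\tilde y)$, and $\tilde y$ becomes its representative. Orbits never queried are given independent fresh Bernoulli$(p)$ values.

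\textbf{Correctness of the coupling.} The vertex $\tilde y$ has not been queried before, since distinct orbits are disjoint and each orbit is queried at most once. Its state is therefore a fresh independent Bernoulli$(p)$ variable given the past, so $\eta$ is indeed distributed as $\mathbb{P}_p$ on $G/H$. By construction, the representatives of the open explored orbits form a connected open subgraph of $\omega$ containing $o$, provided $\omega(o)=1$, which we also use as the value $\eta(Ho)$. This subgraph is injective on orbits, so it is infinite whenever the $\eta$-cluster of $Ho$ is infinite.

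\textbf{The main check.} The step needing care is that $\tilde y$ is genuinely a neighbour of $\tilde x$. This is where we use that $H$ acts by automorphisms: if $hx'=\tilde x$, then $hy'\sim\tilde x$ and $hy'\in Hy$. It is also where the simple-graph quotient and site percolation fit together. A bond version would instead need care with multi-edges, whereas here only vertices are revealed, each at most once per orbit.
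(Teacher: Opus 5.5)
Your argument is correct. Note that the paper does not prove this statement: it quotes it from Benjamini--Schramm and uses it as a black box in the recurrent case of Corollary~\ref{gapat1}, so there is no in-paper proof to compare against.

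What you give is the standard lifting coupling. You explore the $\eta$-cluster of $Ho$ and lift each step along an edge of $G$ leaving the current representative. Each orbit is then queried exactly once, at a fresh vertex. This gives $\theta^G_o(p)\ge\theta^{G/H}_{Ho}(p)$ at the same parameter, with no hypothesis on $H$.

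It is worth contrasting this with the paper's own coupling in Lemma~\ref{groups}, which runs in the opposite direction. There, $\omega$ on $G$ is pushed down by declaring an orbit open if it contains at least one open vertex. That needs bounded orbits and costs the parameter change $p\mapsto 1-(1-p)^k$. In the paper the two couplings are used in tandem: Lemma~\ref{groups} goes down from $\mathrm{Cay}(\Gamma,S)$ to $\mathrm{Cay}(\Gamma,S)/F$, and this theorem goes up from $G/H$ to $G$.

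Two routine points should be made explicit.
\begin{itemize}
\item Choose $\tilde y$ by a fixed rule depending only on the exploration history, for instance the least-indexed neighbour of $\tilde x$ in $Hy$ under a fixed enumeration of $V$. This makes each query adapted, which is what the freshness argument needs.
\item To pass from ``the $\eta$-cluster of $Ho$ is infinite'' to ``infinitely many representatives'', observe that $G/H$ is locally finite. Your lifting step already shows $\deg_{G/H}(Hx)\le\deg_G(\tilde x)$. Hence the exploration runs forever exactly when it finds infinitely many open orbits. If it halts, every orbit adjacent to the explored open set is explored and closed, so that set is the whole $\eta$-cluster.
\end{itemize}
The case where $G/H$ is finite, so that $p_c(G/H)=1$, is trivial.
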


Write $\tau:V\to V/H$, $\tau(v):=Hv$, for the quotient map. In particular, $\tau^{-1}(\tau(v))=Hv$ is precisely the orbit of $v$.

\begin{lemma}[{A site percolation/vertex-orbit version of \cite[Lemma~2.9]{hutchcroftNontrivialityPhaseTransition2024}}] \label{groups}
Suppose that $H \leq\mathrm{Aut}(G)$ satisfies $|Hv| \leq k$ for some $k \geq 1$ and every $v \in V$. For each $v \in V$ and $p \in [0,1]$, the cluster of $\tau(v)$ in Bernoulli-$(1-(1-p)^k)$ site percolation on $G/H$ stochastically dominates the image under $\tau$ of the cluster of $v$ in Bernoulli-$p$ site percolation on $G$. That is,
\[
\mathbb{P}^{G/H}_{1-(1-p)^k}\!\left(C_{\tau(v)} \in A\right) \geq 
\mathbb{P}^{G}_{p}\!\left(\tau(C_v) \in A\right)
\]
for every increasing measurable set $A \subseteq \{0,1\}^{V/H}$, and where by writing $S\in A$ for some $S\subset V/H$, we mean $\mathbf{1}_S\in A$.
\end{lemma}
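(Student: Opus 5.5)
The plan is a direct coupling that reveals the cluster of $v$ in $G$ one vertex at a time and maps the exploration to $G/H$ under $\tau$. Fix $v$ and run an exploration of the open cluster of $v$ in Bernoulli-$p$ site percolation on $G$. At each step we pick, by a fixed deterministic rule, an unexplored vertex adjacent to the explored open set and reveal its state. We run a parallel exploration of the cluster of $\tau(v)$ in $G/H$, where each orbit $\tau(u)$ is declared open if at least one vertex of the orbit $Hu$ is open in $G$.

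First observe the law of this induced configuration. The orbits partition $V$, so the events $\{\text{some vertex of } Hu \text{ is open}\}$ are independent across distinct orbits. Each has probability $1-(1-p)^{|Hu|}\le 1-(1-p)^k$. Hence the induced configuration $\eta$ on $V/H$ is an independent site percolation whose parameters are at most $1-(1-p)^k$. It is therefore stochastically dominated by Bernoulli-$(1-(1-p)^k)$ site percolation $\eta'$ on $G/H$, with a monotone coupling $\eta\le\eta'$. Independence here is exact, and we never need the exploration to respect it.

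Second, show deterministically that $\tau(C_v(\omega))\subseteq C_{\tau(v)}(\eta)$. If $v=u_0\sim u_1\sim\dots\sim u_m$ is an open path in $G$, then each $\tau(u_i)$ is open in $\eta$, since $u_i\in Hu_i$ is open. Consecutive orbits are either equal or adjacent in $G/H$, by the definition of the quotient edges and because $u_i\sim u_{i+1}$. Deleting repetitions gives an open path from $\tau(v)$ to $\tau(u_m)$ in $G/H$. The one subtlety is the case $v$ closed, where $C_v=\varnothing$ and the inclusion is trivial; the convention for $C_v$ should be checked but causes no issue.

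Finally, combine the two steps. We have $\tau(C_v(\omega))\subseteq C_{\tau(v)}(\eta)\subseteq C_{\tau(v)}(\eta')$, where the last inclusion holds because open clusters are monotone in the configuration. For increasing $A$ this gives $\mathbf 1_{\tau(C_v)}\in A\Rightarrow \mathbf 1_{C_{\tau(v)}(\eta')}\in A$, which yields the inequality. So the exploration process is not actually needed.

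The main obstacle, mild here, is making sure the domination uses independence over orbits rather than the bond-version argument of \cite{hutchcroftNontrivialityPhaseTransition2024}, where edge orbits play this role. One must also check that loops created when $u\sim hu$ are harmless: $G/H$ is simple, but such steps stay within a single orbit and are simply dropped.
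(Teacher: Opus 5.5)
Your argument is correct and is essentially the paper's proof: you define the orbit-induced configuration $\eta$, note that it is an independent site percolation on $G/H$ with parameters at most $1-(1-p)^k$ and hence monotonically coupled below Bernoulli-$(1-(1-p)^k)$ (the paper realises this coupling explicitly with i.i.d.\ uniforms), and then combine the deterministic inclusion $\tau(C_v)\subseteq C_{\tau(v)}(\eta)$ with the fact that $A$ is increasing. The exploration process you open with is indeed superfluous, as you observe yourself.
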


\begin{proof}
Let $\omega$ be Bernoulli-$p$ site percolation on $G$, and let $\eta\in\{0,1\}^{V/H}$ be defined by taking $\eta(u)=1$ if and only if there is at least one $\omega$-open vertex in the preimage $\tau^{-1}(u)$. For each vertex $v$ of $G$, we clearly have that the image under $\tau$ of the cluster of $v$ in $\omega$ is contained in the cluster of $\tau(v)$ in $\eta$. Moreover, the vertices of $G/H$ are open or closed in $\eta$ independently of one another and each is open with probability at most $q:=1-(1-p)^k$.

Let $(U(u))_{u\in V/H}$ be i.i.d.\ Uniform$[0,1]$ random variables with joint law $\mathbb{P}$, and let \linebreak${p_u:=\mathbb{P}^G_p(\eta(u)=1)}$. Define $\zeta(u):=\mathbf{1}_{\{U(u)<q\}}$; then $\zeta$ is Bernoulli-$q$ site percolation on $G/H$. Also define $\tilde\eta(u):=\mathbf{1}_{\{U(u)<p_u\}}$; since the $\tilde\eta(u)$ are independent across $u$ and each is Bernoulli-$p_u$, $\tilde\eta$ under law $\mathbb{P}$ has the same distribution as $\eta$ under law $\mathbb{P}_p^G$. Since $p_u\le q$, we have $\{U(u)<p_u\}\subseteq\{U(u)<q\}$, so $\tilde\eta\leq\zeta$ pointwise. Since $\tau(C_v)\subseteq C_{\tau(v)}$ and $A$ is increasing, 
\begin{align*}
\mathbb{P}^G_p(\tau(C_v)\in A)\leq\mathbb{P}^G_p(C_{\tau(v)}(\eta)\in A) &= \mathbb{P}(C_{\tau(v)}(\tilde\eta)\in A) \\ &\leq\mathbb{P}(C_{\tau(v)}(\zeta)\in A) = \mathbb{P}^{G/H}_{1-(1-p)^k}\!\left(C_{\tau(v)} \in A\right). \qedhere
\end{align*}
\end{proof}

We also state one more result, to be used in the proof of Corollary~\ref{dim2} as previously mentioned, which gives a bound on the effective resistance from a finite set to infinity. First, we must define two more structural quantities of interest. Define the isoperimetric profile $\varphi(G, t;c)$ by
\[
\varphi(G, t;c) := \inf_{K \subset V}\{|\partial^E K|_c ; t \leq |K|_\pi < \infty\}.
\]

Define also the \textit{anchored} isoperimetric profile $\varphi(G, A, t;c)$ by
\[
\varphi(G, A, t;c) := \inf_{K \subset V}\{|\partial^E K|_c ; A \subseteq K, K/A \text{ is connected}, t \leq |K|_\pi < \infty\}
\]
for $A \subset V(G)$. Here, we say that $K/A$ is connected when the graph induced by $K$ in $G/A$ is connected, where we have identified all of $A$ to a single vertex.

\begin{theorem}[{\cite[Theorem~6.41]{lyonsProbabilityTreesNetworks2017}}]\label{lp6.41}
View $G$ as a network with arbitrary edge conductances and suppose that $|V(G)|_\pi = \infty$. Let $A$ be a finite set of vertices. Suppose also that $\varphi(t) := \varphi(G, A, t)$ has the property that $\varphi(t) \geq f(t)$ for some increasing function $f$ on $[|A|_\pi, \infty)$ that satisfies $0 < f(t) \leq t$ and $f(2t) \leq \alpha f(t)$ for some $\alpha$. Then
\[
\mathscr{R}(A \leftrightarrow \infty) \leq \int_{|A|_\pi}^\infty \frac{4\alpha^2}{f(t)^2} dt.
\]
\end{theorem}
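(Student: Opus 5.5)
The plan is to bound the resistance from above using the level sets of the voltage function. Each level set will be a competitor in the anchored isoperimetric profile. Integrating the isoperimetric inequality over voltage levels (a discrete coarea formula) and applying Cauchy--Schwarz against the energy will then give the bound. I would first reduce to finite networks. Take an exhaustion $V_1\subset V_2\subset\cdots$ of $V$ by finite sets containing $A$, and let $G_n$ be the network obtained by wiring $V\setminus V_n$ into a single vertex $z_n$. Then $\mathscr{R}(A\leftrightarrow z_n)\uparrow\mathscr{R}(A\leftrightarrow\infty)$, so it suffices to prove the bound for each $G_n$ with a constant independent of $n$.

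On $G_n$, let $u$ be the voltage with $u\equiv R:=\mathscr{R}(A\leftrightarrow z_n)$ on $A$ (all of $A$ shorted) and $u(z_n)=0$. The associated current $i$ is then a unit flow. For $0<s<R$ put $K_s:=\{x: u(x)\ge s\}$. The properties needed are the following.
\begin{enumerate}
\item $K_s\supseteq A$ and $K_s$ is a finite subset of $V_n$.
\item $K_s/A$ is connected. By the maximum principle every vertex of $K_s$ is joined to $A$ by a path along which $u$ is non-decreasing, and such a path stays in $K_s$.
\item Since $\partial^E K_s$ is the same in $G$ and in $G_n$, flow conservation gives $\sum_{e\in\partial^E K_s} c(e)\,|du(e)| = 1$.
\end{enumerate}
Consequently, whenever $|K_s|_\pi\ge t$, we have $|\partial^E K_s|_c\ge\varphi(t)\ge f(t)$.

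Next I would split the voltage range dyadically in volume. Let $t_k:=2^k|A|_\pi$ and $s_k:=\inf\{s: |K_s|_\pi\le t_k\}$, and consider a band $a<s<b$ of levels on which $t_k\le |K_s|_\pi\le t_{k+1}$. Every such level satisfies $|\partial^E K_s|_c\ge f(t_k)$. The coarea identity gives
\[
(b-a)f(t_k)\le\int_a^b|\partial^E K_s|_c\,ds=\sum_e c(e)\,\big|[u_-(e),u_+(e)]\cap[a,b]\big| .
\]
The right-hand side is at most $\sum_{e\in E_{\mathrm{band}}}c(e)|du(e)|$, where $E_{\mathrm{band}}$ is the set of edges meeting the band, each truncated to its portion inside $[a,b]$. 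By Cauchy--Schwarz this is bounded by $\big(\sum c(e)\,du_{\mathrm{trunc}}(e)^2\big)^{1/2}\big(\sum_{E_{\mathrm{band}}}c(e)\big)^{1/2}$. The truncated energy equals $\int_a^b(\text{flow through }\partial K_s)\,ds=b-a$, by truncating $u$ to $[a,b]$ and using the unit flow. Each band edge has its upper endpoint in $K_a$, so the total conductance of the band edges is at most $|K_a|_\pi\le t_{k+1}$. Altogether
\[
b-a\le \frac{t_{k+1}}{f(t_k)^2}.
\]

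Summing over $k\ge0$ covers all of $(0,R)$, because $|K_s|_\pi\ge|A|_\pi$ throughout. Using monotonicity of $f$ and $f(t_{k+1})\le\alpha f(t_k)$, we get
\[
\frac{t_{k+1}}{f(t_k)^2}\le \frac{2\alpha^2\, t_k}{f(t_{k+1})^2}\le 4\alpha^2\int_{t_k}^{t_{k+1}}\frac{dt}{f(t)^2}.
\]
This yields $R\le\int_{|A|_\pi}^\infty 4\alpha^2 f(t)^{-2}\,dt$ uniformly in $n$. The hypothesis $f(t)\le t$ is only needed to handle degenerate bands near the top, for instance to absorb the very first band. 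The hypothesis $|V|_\pi=\infty$ guarantees that the volumes eventually exceed every $t_k$, so the decomposition is exhaustive.

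I expect the main obstacle to be bookkeeping the coarea/Cauchy--Schwarz step correctly. The energy has to be truncated to the band so that the band energies sum to at most $R$ rather than being counted repeatedly. The edge mass also has to be controlled by $|K_a|_\pi$ and not by something larger. Tracking the constant so that it comes out exactly as $4\alpha^2$ is the remaining delicate point.
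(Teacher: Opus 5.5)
The paper does not prove this statement. It is quoted from \cite[Theorem~6.41]{lyonsProbabilityTreesNetworks2017} and used only as a black box in the proof of Corollary~\ref{dim2}, so there is no internal argument to compare yours with. Judged on its own, your proposal is a correct proof along the standard route: wired exhaustion, super-level sets $K_s=\{u\ge s\}$ of the voltage as anchored competitors via the maximum principle, then coarea and Cauchy--Schwarz on dyadic volume bands. A few steps need to be stated more carefully.

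First, the truncated energy is not equal to $b-a$. Put $\ell_e:=\bigl|(u_-(e),u_+(e)]\cap(a,b)\bigr|\le|du(e)|$. Then you only get $\sum_e c(e)\ell_e^2\le\sum_e c(e)\ell_e|du(e)|=\int_a^b\sum_{e\in\partial^E K_s}c(e)|du(e)|\,ds=b-a$. The inequality is strict as soon as some edge straddles the whole band, but the inequality is all you use.

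Second, $s\mapsto|K_s|_\pi$ is non-increasing and left-continuous with jumps. So at the lower endpoint $a$ of a band, $|K_a|_\pi$ can exceed $t_{k+1}$, and your bound "$|K_a|_\pi\le t_{k+1}$" can fail. Bound the edge mass instead by $|\{u>a\}|_\pi=\lim_{s\downarrow a}|K_s|_\pi\le t_{k+1}$. This is legitimate because $\ell_e>0$ forces $u_+(e)>a$, so the upper endpoint of every band edge lies in $\{u>a\}$.

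Third, your closing remarks about where $f(t)\le t$ and $|V|_\pi=\infty$ enter are not backed by the argument; in fact neither hypothesis is used. In $G_n$ every $s\in(0,R]$ satisfies $t_0\le|K_s|_\pi\le|V_n|_\pi<\infty$, so $s$ lies in some band whatever $|V|_\pi$ is. The top band, where $K_s=A$, is covered by the same Cauchy--Schwarz estimate as all the others. The condition $f(t)\le t$ would only be needed if you bounded the top band crudely through the flux, namely by $1/|\partial^E A|_c\le 1/f(t_0)\le t_0/f(t_0)^2$.

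Finally, $t_k/f(t_{k+1})^2\le\int_{t_k}^{t_{k+1}}f(t)^{-2}\,dt$ holds directly, since the interval has length $t_k$ and $f$ is increasing. So your chain actually gives $R\le 2\alpha^2\int_{|A|_\pi}^\infty f(t)^{-2}\,dt$, uniformly in $n$. This implies the stated bound with constant $4\alpha^2$.
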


\section{A small, well-connected, harmonic-measure covering set}\label{sec:covering_set}

In this section, we show that if the random walk $(X_t)_{t\in \mathbb{N}_0}$ (from Section~\ref{sec:intro}) can easily escape from any vertex, then any finite connected set $A$ can be covered in harmonic measure by a small collection of vertices (found greedily), proportional to the size of $A$'s vertex boundary. We will apply this in the next section to prove Theorem~\ref{thm2_extend}.

\begin{proposition}\label{harmonic_cover}
Let $G$ be an infinite, connected, locally finite graph. Assume also that there exists $\varepsilon>0$ such that for all vertices $v\in V$, $\mathbb{P}^X_v(\forall t \geq 1: X_t\neq v)\geq \varepsilon$. Let $A$ be a finite, connected set in $V(G)$ with vertex boundary $\Pi$. Then, for any vertex $o\in A$, there exists a set $Y\subset A$ containing $o$ such that, for all $v\in A$, $\mathbb{P}^X_v(\mathrm{T}_Y<\mathrm{T}_\Pi)\geq 1/2$ and $|Y|<4|\Pi|/\varepsilon$.
\end{proposition}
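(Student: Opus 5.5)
We construct $Y$ greedily and control its size through the effective conductance between $Y$ and $\Pi$, computed in the finite network induced on $A\cup\Pi$ with the conductances \eqref{conductances}.

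\textbf{Reduction to a finite network.} Since $A$ is a finite connected component of $G\setminus\Pi$, every path from $A$ to infinity passes through $\Pi$. The walk started in $A$ cannot stay in the finite set $A$ forever, because every vertex has $\pi(x)>0$. Hence $\mathrm{T}_\Pi<\infty$ almost surely from any $v\in A$. For a set $Y\subseteq A$, write $h_Y(x):=\mathbb{P}^X_x(\mathrm{T}_Y<\mathrm{T}_\Pi)$. This is the unique function that equals $1$ on $Y$, equals $0$ on $\Pi$, and is harmonic on $A\setminus Y$ for $(X_t)$. The laziness only adds self-loops, which contribute nothing to energies. Write $\mathcal{C}(Y\leftrightarrow\Pi)=\mathcal{E}(h_Y)$ for the Dirichlet energy, which is the effective conductance.

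\textbf{Greedy construction.} Start with $Y_0=\{o\}$. While some $v\in A$ has $h_{Y_k}(v)<1/2$, set $Y_{k+1}=Y_k\cup\{v\}$. When the procedure stops, the final $Y$ satisfies the harmonic-measure property by construction, and it contains $o$. Termination follows from the size bound below, and also trivially from the finiteness of $A$.

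\textbf{Conductance increments (main step).} Let $Y'=Y\cup\{v\}$ with $a:=1-h_Y(v)>1/2$. Let $g(x):=\mathbb{P}^X_x(\mathrm{T}_v<\mathrm{T}_{Y\cup\Pi})$. By the strong Markov property at $\mathrm{T}_v$,
\[
h_{Y'}=h_Y+a\,g .
\]
Expanding the energy gives
\[
\mathcal{E}(h_{Y'})=\mathcal{E}(h_Y)+2a\,\mathcal{E}(h_Y,g)+a^2\mathcal{E}(g).
\]
By discrete Green's identity, $\mathcal{E}(h_Y,g)=\sum_x g(x)\,(\Delta h_Y)(x)$. This sum vanishes term by term: $g=0$ on $Y\cup\Pi$, and $h_Y$ is harmonic at every other point of $A$, including $v$.

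Next, $\mathcal{E}(g)=\mathcal{C}(v\leftrightarrow Y\cup\Pi)=\mathbb{P}^X_v(\mathrm{T}_{Y\cup\Pi}<\mathrm{T}_v^+)$. This uses the lazy walk with $p(x,y)=c(x,y)$, exactly as in \eqref{first_step}. A walk from $v\in A$ that never returns to $v$ must leave $A$, so it must hit $\Pi$ before returning. Hence this probability is at least $\mathbb{P}^X_v(\forall t\ge1: X_t\neq v)\ge\varepsilon$. Therefore each greedy step increases $\mathcal{C}(Y\leftrightarrow\Pi)$ by at least $a^2\varepsilon>\varepsilon/4$. The same argument gives the starting value $\mathcal{C}(\{o\}\leftrightarrow\Pi)\ge\varepsilon$.

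\textbf{Upper bound and conclusion.} The effective conductance is at most the total conductance of the edges incident to $\Pi$. To see this, take the cut at $\Pi$, or use the test function $\mathbf 1_{A}$ in the Dirichlet principle. That total is at most $\sum_{\pi\in\Pi}\pi(\pi)\le|\Pi|$, since $\pi(x)\le1$ for every $x$. This is exactly where the degree-normalised conductances are used.

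Combining the two bounds gives
\[
\varepsilon+(|Y|-1)\,\varepsilon/4<|\Pi|,
\]
so $|Y|<4|\Pi|/\varepsilon-3<4|\Pi|/\varepsilon$.

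The main obstacle is the increment identity, specifically checking that the cross term vanishes. The points to handle carefully are the lazy self-loops and the fact that $v$ lies outside $Y\cup\Pi$, so that $h_Y$ is harmonic there.
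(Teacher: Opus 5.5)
Your proposal is correct and follows essentially the same route as the paper. Both arguments enlarge $Y$ greedily, write $h_{Y'}=h_Y+a\,g$ with the cross term vanishing by harmonicity, get an increment of at least $a^2\varepsilon>\varepsilon/4$ from $\mathcal{E}(g)=\mathbb{P}^X_v(\mathrm{T}_{Y\cup\Pi}<\mathrm{T}_v^+)\geq\varepsilon$, and cap the conductance by $|\Pi|$. The only differences are cosmetic: you add any vertex with $h_Y<1/2$ rather than one adjacent to the good set, and you justify $\mathcal{C}\le|\Pi|$ via the Dirichlet principle with test function $\mathbf{1}_A$ rather than the paper's symmetry of effective conductance.
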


\begin{proof}
We define a sequence of nested sets in the following way. Let $Y_1 := \{o\}$. Having defined $Y_1 \subset Y_2 \subset \cdots \subset Y_i$, define
\[
W_i := \left\{v \in A : \mathbb{P}^X_v(T_{Y_i} < T_\Pi) \geq \frac{1}{2}\right\}.
\]
Note that $W_i$ is non-empty for all $i$ since $o \in Y_i$ implies $o \in W_i$. If $W_i \neq A$, then, since $A$ is connected, there exists an edge $\{u,v\} \in E$ with $u \in W_i$ and $v \notin W_i$; let $Y_{i+1} := Y_i \cup \{v\}$ and $y_{i+1} := v$. Otherwise, if $W_J = A$ for some $J$, take $Y := Y_J$. Note that $J < \infty$ since $A$ is finite.

Consider now the effective conductances
\begin{equation}\label{eq:cond_def}
\mathcal{C}_i := \mathcal{C}(Y_i, \Pi) := \sum_{y \in Y_i} \sum_{z \sim y} c(y,z)\mathbb{P}^X_z(T_\Pi < T_{Y_i}).
\end{equation}
Write $q_i(z) := \mathbb{P}^X_z(T_{Y_i} < T_\Pi)$ and $r_i(z) := \mathbb{P}^X_z(T_{y_{i+1}} < T_{Y_i \cup \Pi})$, so that
\[
q_{i+1}(z) = \mathbb{P}^X_z(T_{Y_{i+1}} < T_\Pi) = q_i(z) + r_i(z)\big(1 - q_i(y_{i+1})\big),
\]
since either $Y_i$ is hit before $\Pi$, or $y_{i+1}$ is hit before $Y_i$ and $\Pi$, and then $\Pi$ is hit before $Y_i$, starting afresh from $y_{i+1}$ and using the strong Markov property. Define also the symmetric bilinear form
\[
\mathcal{E}(f,g) := \sum_{\{x,y\} \in E} c(x,y) \big(f(x) - f(y)\big)\big(g(x)-g(y)\big),
\]
commonly called the \textit{energy}, or \textit{Dirichlet energy}, where the sum is over unordered edges. Write $\mathcal{E}(f) := \mathcal{E}(f,f)$. It is a standard fact (often called the \textit{Dirichlet principle}; see, for example, \cite[Exercise~1.3.11]{doyleRandomWalksElectric2000}) that $\mathcal{E}(q_i) = \mathcal{C}_i$. However, we shall include the proof here due to its brevity and the usefulness of some intermediate expressions:
\begin{align*}
\mathcal{E}(f,g)
  &= \frac12 \sum_{x\in V}\sum_{y\sim x} c(x,y)\big(f(x)-f(y)\big)\big(g(x)-g(y)\big) \\
  &= \frac12 \sum_{x\in V}\sum_{y\sim x} c(x,y)\,f(x)\big(g(x)-g(y)\big)
   - \frac12 \sum_{x\in V}\sum_{y\sim x} c(x,y)\,f(y)\big(g(x)-g(y)\big) \\
  &= \sum_{x\in V} f(x) \sum_{y\sim x} c(x,y)\big(g(x)-g(y)\big),
\end{align*}
by noting that the conductances are symmetric and symmetrising the sum over edges. Then since $q_i$ is harmonic off $Y_i \cup \Pi$, equal to $1$ on $Y_i$, and $0$ on $\Pi$, we have that
\[
\mathcal{E}(q_i) = \sum_{x \in Y_i} \sum_{z \sim x} c(x,z)\big(1 - q_i(z)\big) = \mathcal{C}_i,
\]
since for $x\notin Y_i \cup \Pi$, $\sum_{z\sim x}c(x,z)(q_i(x)-q_i(z)) = \pi(x)\,q_i(x) - \sum_{z\sim x} c(x,z)q_i(z)$, and harmonicity gives $q_i(x) = \sum_{z\sim x} \frac{c(x,z)}{\pi(x)}q_i(z)$. Thus,
\[
\mathcal{C}_{i+1} - \mathcal{C}_i = \mathcal{E}\big(q_i + r_i(1-q_i(y_{i+1}))\big) - \mathcal{E}(q_i) = 2\big(1-q_i(y_{i+1})\big)\mathcal{E}(q_i,r_i) + \big(1-q_i(y_{i+1})\big)^2\mathcal{E}(r_i,r_i)
\]
by expanding the quadratic form. Since $q_i$ is harmonic away from $Y_i \cup \Pi$ and $r_i \equiv 0$ on this set, the first term vanishes. Moreover, $r_i$ is harmonic away from $Y_i \cup \Pi \cup \{y_{i+1}\}$ and equals $1$ at $y_{i+1}$, so, by one-step decomposition,
\[
\mathcal{E}(r_i) = \sum_{z \sim y_{i+1}} c(y_{i+1},z)\big(1 - r_i(z)\big) = \mathbb{P}^X_{y_{i+1}}\big(T_{Y_i \cup \Pi} < T_{y_{i+1}}^+\big) \geq \mathbb{P}^X_{y_{i+1}}\big(\forall t \geq 1 : X_t \neq y_{i+1}\big) \geq \varepsilon,
\]
since the walk a.s. leaves any finite set and it must do so through its vertex boundary. Thus
\[
\mathcal{C}_{i+1} - \mathcal{C}_i \geq \big(1 - q_i(y_{i+1})\big)^2 \varepsilon.
\]
Finally, $y_{i+1} \notin Y_i$, so $q_i(y_{i+1}) < \tfrac{1}{2}$, hence
\[
\mathcal{C}_{i+1} - \mathcal{C}_i \geq \frac{\varepsilon}{4}.
\]
On the other hand, by \eqref{eq:cond_def} and symmetry of effective conductance\footnote{This can be seen by writing $\mathcal{C}(\Pi,Y_i) = \mathcal{E}(1-q_i) = \mathcal{E}(q_i) = \mathcal{C}(Y_i,\Pi)$.}, $\mathcal{C}_J \leq |\Pi|$. Also,
\[
\mathcal{C}_1 = \sum_{y \sim o}c(o,y) \mathbb{P}^X_y(T_\Pi < T_o) = \mathbb{P}^X_o(T_\Pi < T_o^+) \geq \varepsilon.
\]
Using the recursion, $\mathcal{C}_J - \mathcal{C}_1 \geq (J-1)\varepsilon/4$, so
\[
|\Pi| \geq \mathcal{C}_J \geq (J-1)\frac{\varepsilon}{4} + \varepsilon = \frac{\varepsilon}{4}(J+3),
\]
whence
\[
J \leq \frac{4|\Pi|}{\varepsilon} - 3 < \frac{4}{\varepsilon}|\Pi|.
\]
\end{proof}

The anticipation of this result was somewhat motivated by the proof of \cite[Proposition~3]{easoCountingMinimalCutsets2025} involving a `chained sequence' of vertices and, in particular, the second and third properties they prove, being that the sequence is uniformly well-connected to the remainder of the graph and has size controlled linearly in the size of the graph's boundary.

Note that Proposition~\ref{harmonic_cover} generalises naturally to any random walk with symmetric conductances such that for all vertices $x\in V$, $\pi(x)\leq 1$. However, we do not require this generalisation and thus we have stated the proposition in terms of the random walk $(X_t)_{t\in \mathbb{N}_0}$.

\section{Proof of Theorem~\ref{thm2_extend}}\label{sec:proof2}

Let $G$ be an infinite, connected, locally finite graph such that for all vertices $v\in V$,
\[\mathbb{P}^X_v(\forall t \geq 1: X_t\neq v)\geq \varepsilon.
\]

We follow a similar idea to the proof sketched in Section~9 of \cite{easoCountingMinimalCutsets2025}. Let $\varphi\in \mathbb{R}^V$, with law $\mathbb{P}$ and corresponding expectation $\mathbb{E}$, be the (centred) Gaussian Free Field (GFF) on $G$. Note that throughout this proof, every GFF is implicitly associated to the (lazy) random walk $(X_t)_{t\in \mathbb{N}_0}$. By definition of the GFF, for $v\in V$,
\begin{equation}\label{gff_var_ub}
\mathrm{Var}(\varphi(v)) = G(v,v) = \mathbb{E}^X_v\left( \sum_{n=0}^\infty \boldsymbol{1}_{\{X_n=v\}}\right) = \frac{1}{\mathbb{P}^X_v(\forall t \geq 1: X_t\neq v)} \leq \frac{1}{\varepsilon}
\end{equation}
where the penultimate equality follows from the fact that the number of visits of the walk to its starting vertex (including the starting visit) is geometric with `success' probability being the escape probability.

Now, fix $o\in V$ and let $C$ be the cluster of $o$ in the percolation model induced by the excursion set $\{\varphi \geq 0\}:= \{ x\in V: \varphi(x) \geq 0\}$. Fix a minimal vertex cutset $\Pi\in \mathcal{Q}_n^\mathrm{site}(o)$ from $o$ to $\infty$ in $G$. We will show that for some positive, finite constant $K:=K(\varepsilon)$,
\begin{equation}\label{bound2}
\mathbb{P}(\partial_\infty^V C = \Pi) \geq \frac{1}{K^n}.
\end{equation}
This concludes the proof since
\[
1 \geq \sum_{\Pi \in \mathcal{Q}^\mathrm{site}_n(o)} \mathbb{P}(\partial^V_\infty C = \Pi) \stackrel{\eqref{bound2}}{\geq} |\mathcal{Q}^\mathrm{site}_n(o)|/K^n.
\]

Let $A$ be the connected component of $o$ in the graph induced by the vertex set $V\setminus\Pi$. Apply Proposition~\ref{harmonic_cover} and let $Y\subset A$ be the set containing $o$ such that, for all $v\in A$, $\mathbb{P}^X_v(\mathrm{T}_Y<\mathrm{T}_\Pi)\geq 1/2$ and $|Y|<4n/\varepsilon$. Let
\[
\mathcal{F} := \left\{\varphi(\pi) \in [-2,-1]\,\ \forall \pi \in \Pi,\, \varphi(y) \in [4,5]\,\ \forall y \in Y\right\}.
\]
We claim that there exists a constant $c:=c(\varepsilon)$ such that 
\begin{equation}\label{F_bound}
\mathbb{P}(\mathcal{F}) \geq c^n.
\end{equation}
Indeed, enumerate the at most $(4/\varepsilon +1)n$ vertices of $\Pi \cup Y$ by $z_1,\ldots,z_m$ and write the prescribed interval of $\varphi(z_j)$ as $I_j$. Write $E_j := \{\varphi(z_j)\in I_j \}$. Then by the law of total probability,
\[
\mathbb{P}(\mathcal{F}) = \mathbb{P}(E_1)\prod_{j=2}^m\mathbb{P}(E_j|E_1,\ldots, E_{j-1}).
\]
Fix $j$, let $\Lambda:=\{z_1,\ldots, z_{j-1}\}$, and let $U:=V\setminus \Lambda$. By the Markov property of the GFF, Lemma~\ref{markov}, conditionally on $\varphi(z_k)=a_{z_k} \in I_k$ for $k\in \{1, \ldots, j-1\}$, we can write
\[
\varphi = \varphi_{\Lambda}+h_\Lambda
\]
where $\varphi_{\Lambda}$ is a centred GFF on $U$ that is zero on $\Lambda$, and $h_\Lambda$ is the harmonic extension into $U$ of the values of $\varphi$ in $\Lambda$, which is given by the expression \eqref{eq:hK}.
Hence, conditionally on $(\varphi(z_1),\ldots,\varphi(z_{j-1})) = a := (a_{z_1},\ldots a_{z_{j-1}})$,
\[
\varphi(z_j) = \varphi_{\Lambda}(z_j) + h_\Lambda(z_j),
\]
and so
\[
N := \varphi(z_j) \mid \{\varphi|_\Lambda = a\} \sim \mathcal{N}\big(h_\Lambda(z_j),\, G_\Lambda(z_j,z_j)\big),
\]
since $h_\Lambda$ is deterministic, and by definition of the GFF variance.

Now, we can bound this conditional mean and variance. Indeed, on $E_1 \cap \cdots \cap E_{j-1}$, \mbox{$\varphi(z_i) \in [-2, 5]$} for all $i=1,\ldots,j-1$, so, by \eqref{eq:hK}, $h_\Lambda(z_j) \in [-2,5]$. Furthermore, recall that
\[
G_\Lambda(y,y) = \mathbb{E}^X_y\left(\sum_{t<T_\Lambda} \mathbf{1}_{\{X_t=y\}}\right),
\]
so $G_\Lambda(z_j,z_j) \geq 1$. Moreover, killing the walk at $T_\Lambda$ can only decrease the Green function, so $G_\Lambda(z_j,z_j) \leq G(z_j,z_j) \leq 1/\varepsilon$ by \eqref{gff_var_ub}. Thus,
\[
1 \leq G_\Lambda(z_j,z_j) \leq \frac{1}{\varepsilon}.
\]
Now, for either $I = [-2, -1]$ or $I = [4, 5]$, we have, for all $t \in I$, that
\[
|t - h_\Lambda(z_j)| \leq 7.
\]
Writing $\mu$ for $h_\Lambda(z_j)$ and $\sigma^2$ for $G_\Lambda(z_j,z_j)$, we also have
\[
\frac{1}{\sqrt{\sigma^2}} \geq \sqrt{\varepsilon} \qquad \text{and} \qquad \frac{(t-\mu)^2}{2\sigma^2} \leq \frac{49}{2},
\]
so the Gaussian density satisfies
\[
\frac{1}{\sqrt{2\pi\sigma^2}} \exp\left(-\frac{(t-\mu)^2}{2\sigma^2}\right) \geq \sqrt{\frac{\varepsilon}{2\pi}}\, e^{-49/2}.
\]
Hence
\[
\mathbb{P}(N \in I) \geq  \sqrt{\frac{\varepsilon}{2\pi}}\, e^{-49/2} =: q_\varepsilon.
\]
And so $\mathbb{P}(E_j \mid E_1,\ldots,E_{j-1}) \geq q_\varepsilon$ for all $j$, since this bound is independent of the revealed values. Therefore
\[
\mathbb{P}(\mathcal{F}) \geq q_\varepsilon^m \geq q_\varepsilon^{(4/\varepsilon+1)n}.
\]
So we can let
\[
c(\varepsilon) := q_\varepsilon^{(4/\varepsilon+1)} = \left(\sqrt{\frac{\varepsilon}{2\pi}}\, e^{-49/2}\right)^{(4/\varepsilon+1)},
\]
which proves \eqref{F_bound} as claimed.

Now, let $b:=(b_v)_{v\in \Pi \cup Y} = (\varphi(v))_{v\in \Pi \cup Y}$ be some exact values satisfying the event $\mathcal{F}$. Conditional on $B:=\varphi|_{\Pi \cup Y}=b$ and by Lemma~\ref{markov} applied with $\Lambda = \Pi \cup Y$, we can write $\varphi = \varphi_b+ h_b$ where $\varphi_b$ is a centred GFF on $V \setminus (\Pi\cup Y)$ and zero on $\Pi \cup Y$, while $h_b$ is the harmonic extension into $V\setminus(\Pi\cup Y)$ of the values of $\varphi$ in $\Pi \cup Y$. Recall that
\[
h_b(v) = \mathbb{E}^X_v\big(\varphi(X_{T_{\Pi\cup Y}})\big)
\]
on $A$. In particular, on $A$, writing $q_v := \mathbb{P}^X_v(T_Y < T_\Pi) \geq 1/2$ by Proposition~\ref{harmonic_cover}, we have
\begin{equation}\label{eq:hb_bound}
h_b(v) \geq 4q_v -2 (1-q_v) = 6q_v-2 \geq 1.
\end{equation}
The choice of intervals in the event $\mathcal{F}$ is what forces this lower bound to be positive, the necessity of which will become apparent soon.

\begin{remark}\label{strenthened_assump}
It is at this exact point that stronger assumptions than those of \cite[Theorem~2]{easoCountingMinimalCutsets2025} become necessary. In the proof sketched in \cite[Section~9]{easoCountingMinimalCutsets2025}, the authors prescribe GFF values on either endpoint of the cutset edges; this makes the harmonic extension on $A$ positive everywhere automatically. The analogous event for a vertex cutset would be the prescription of positive GFF values on all neighbours in $A$ of cutset vertices, which is generally degree dependent. If instead one does not prescribe these values, the harmonic extension in $A$ would be negative near its boundary, causing a future argument (namely, the positivity of $g(\varepsilon)$ in \eqref{eq:sgn_positive}) to fail. As can be seen above, Proposition~\ref{harmonic_cover} side-steps this. 
\end{remark}

For each $\pi \in \Pi$, let $a_\pi$ be a neighbour in $A$. Then we have that, by Lemma~\ref{second_bound_pres} and writing $\mathbb{P}_b$ for $\mathbb{P}(\,\cdot \,|B=b)$,
\begin{align}\label{int_bound}
\mathbb{P}\big(\partial_\infty^V C = \Pi\big) &\geq \mathbb{P}\!\left(\mathcal{F},\bigcap_{y\in Y} \left\{o \stackrel{\{\varphi\geq0\}\cap A}{\longleftrightarrow} y\right\},\; \bigcap_{\pi\in\Pi} \left\{a_\pi \stackrel{\{\varphi\geq0\}\cap A}{\longleftrightarrow} Y\right\}\right)  \notag\\
&= \int_\mathcal{R}\, \mathbb{P}_b\!\left(\bigcap_{y\in Y} \left\{o \stackrel{\{\varphi\geq0\}\cap A}{\longleftrightarrow} y\right\},\; \bigcap_{\pi\in\Pi} \left\{a_\pi \stackrel{\{\varphi\geq0\}\cap A}{\longleftrightarrow} Y\right\}\right) \mathbb{P}(B \in db) \notag\\
&\geq c(\varepsilon)^n\, \inf_{b\in \mathcal{R}} \mathbb{P}_b\!\left(\bigcap_{y\in Y} \left\{o \stackrel{\{\varphi\geq0\}\cap A}{\longleftrightarrow} y\right\}\right) \inf_{b\in \mathcal{R}} \mathbb{P}_b\!\left(\bigcap_{\pi\in\Pi} \left\{a_\pi \stackrel{\{\varphi\geq0\}\cap A}{\longleftrightarrow} Y\right\}\right),
\end{align}
where $\mathcal{R}$ is the rectangle defining $\mathcal{F}$, and where the second inequality follows from \eqref{F_bound} and the fact that the law of the GFF $\varphi_b$ (in $A$), and thus of $\varphi\mid\{B=b\}$ (in $A$), is positively associated (i.e the FKG inequality).

To lower bound both remaining probabilities, it is enough to prove that, for all $v \in A$,
\[
\mathbb{P}_b\big(v \stackrel{\{\varphi\geq0\}\cap A}{\longleftrightarrow} Y\big) \geq g(\varepsilon)
\]
for some positive function $g$, since then we may apply Proposition~\ref{prop3} and the FKG inequality to the second and third term respectively. Let $S$ be the union of all clusters of $\{\varphi\geq0\}\cap A$ intersecting $Y$. That is,
\[
S = \left\{v \in A : v \stackrel{\{\varphi\geq0\}\cap A}{\longleftrightarrow} Y\right\}.
\]
Let $\bar S := S \cup \Pi \cup Y \cup D$, where $D:= \{v \in A\setminus S : v \sim S\}$, be the closure of $S$ in $A$ along with the deterministic sets $\Pi$ and $Y$, whose values are already revealed by conditioning on $B=b$. Then $\bar S$ is a stopping set, since one can first reveal the values on $\Pi$ and $Y$, then explore breadth-first from $Y$ (in $A$, because $S$ and $D$ are confined to $A$): for every unexplored neighbour in $A$ of the current explored cluster, if its value is non-negative, add it to $S$. If its value is negative, leave it in the revealed external vertex boundary $D$. This exploration determines $\bar S$ using only the GFF values on $\bar S$.

Let $v \in A$. On $\{v \in S\}$, $\mathrm{sgn}(\varphi(v)) = 1$; on $\{v \notin S\}$, either $v$ is a revealed negative vertex in $D$, or it belongs to an unexplored (graph) component $U(v)$\footnote{By `unexplored graph component', we mean the set of unexplored vertices connected to $v$ in the graph, independent of any field values. This is not to be confused with the usual meaning of a percolation component, which depends on whether the vertices are `open' or `closed.'} that is bounded by negative vertices (in $D$ or $\Pi$). In the latter case, applying Lemma~\ref{strong_markov} with $\Lambda = \bar S$ gives that, conditionally on $\mathcal{H} = \sigma\big(\bar S, \{\varphi(x) : x \in \bar S\}\big)$, $\varphi = \varphi_{\bar S} + h_{\bar S}$ with the usual definitions of each term. Since $U(v)$ is finite and its boundary is contained in $\bar S$, the walk $(X_t)_{t\in \mathbb{N}_0}$ started at $v$ first enters $\bar S$ in $\partial^VU(v)$, so \eqref{eq:hK} gives that $h_{\bar S}(v)<0$. 

Hence for $v\in A$, we may combine the cases where $v$ is revealed or not, so that
\[
\mathbb{E}_b\big(\mathrm{sgn}(\varphi(v)) \mathbf{1}_{\{v\notin S\}}\mid \mathcal{H}\big) \leq 0.
\]
Thus,
\begin{align*}
\mathbb{E}_b\big(\mathrm{sgn}(\varphi(v))\big) &= \mathbb{E}_b\big(\mathrm{sgn}(\varphi(v))\mathbf{1}_{\{v\in S\}}\big) + \mathbb{E}_b\big(\mathrm{sgn}(\varphi(v))\mathbf{1}_{\{v\notin S\}}\big) \\
&\leq \mathbb{P}_b(v \in S) = \mathbb{P}_b\big(v \stackrel{\{\varphi\geq0\}\cap A}{\longleftrightarrow} Y\big).
\end{align*}
So, it suffices to show that $\mathbb{E}_b\big(\mathrm{sgn}(\varphi(v))\big)$ is uniformly positive. Indeed, \mbox{$\varphi(v) \mid \{B=b\} \sim \mathcal{N}(\mu, \sigma^2)$} where $\mu = h_b(v) \geq 1$ from \eqref{eq:hb_bound} and $\sigma^2 \leq G(v,v) \leq 1/\varepsilon$ from \eqref{gff_var_ub}. Hence
\begin{align}\label{eq:sgn_positive}
\mathbb{E}_b\big(\mathrm{sgn}(\varphi(v))\big) &= \mathbb{P}_b(\varphi(v)>0) - \mathbb{P}_b(\varphi(v)<0) \notag\\
&= 1 - 2\,\mathbb{P}_b(\varphi(v)<0) \geq 1 - 2\Phi(-\sqrt{\varepsilon}) = 2\Phi(\sqrt{\varepsilon}) - 1 =: g(\varepsilon) > 0 
\end{align}
and this uniform positivity is due to the positivity of $h_b(v)$. Now, by positive association of $\mathbb{P}_b$, it follows from Proposition~\ref{prop3} that
\[
\mathbb{P}_b\!\left(\bigcap_{y \in Y} \left\{o \stackrel{\{\varphi\geq0\}\cap A}{\longleftrightarrow} y\right\}\right) \geq p(\varepsilon)^n,
\]
where in the context of this proposition, we are thinking of an edge being open if both its endpoints have nonnegative field (which happens with probability at least 1/4 by the FKG inequality), and where
\[
p(\varepsilon) := \left(\frac{2\Phi(\sqrt{\varepsilon}) - 1}{8}\right)^{\frac{12}{\varepsilon\left(2\Phi(\sqrt{\varepsilon})-1\right)}}.
\]
Finally, by \eqref{int_bound} and FKG, we have that
\[
\mathbb{P}\big(\partial_\infty^V C = \Pi\big) \geq c(\varepsilon)^n\, p(\varepsilon)^n\, g(\varepsilon)^n = \frac{1}{K^n}
\]
where
\[
K := \left(\left(\sqrt{\frac{\varepsilon}{2\pi}}\, e^{-49/2}\right)^{\left(\frac{4}{\varepsilon}+1\right)} \left(\frac{2\Phi(\sqrt{\varepsilon}) - 1}{8}\right)^{\frac{12}{\varepsilon\left(2\Phi(\sqrt{\varepsilon})-1\right)}} \left(2\Phi(\sqrt{\varepsilon}) - 1\right)\right)^{-1},
\]
which concludes the proof of \eqref{bound2} and thus of the theorem.

\section{Proof of Corollary~\ref{gapat1} - The Gap at 1}\label{sec:proof3}

Let $G=(V,E)$ be an infinite, connected, locally finite, vertex-transitive graph. It is well-known for such graphs that recurrence implies quadratic volume growth \cite{tesseraSharpRelationsVolume2024}, in the sense that $|B(o,n)|=O(n^2)$ for all $o\in V$ where $B(o,n)$ denotes the graph-distance ball of radius $n$ around $o$.

Suppose first that $G$ is indeed recurrent. If $G$ has linear volume growth, then the critical bond percolation parameter is equal to $1$, for example by \cite[Theorem~1.7]{hutchcroftNontrivialityPhaseTransition2024}. By the classical comparison inequality $p_c^{\mathrm{bond}} \leq p_c^{\mathrm{site}}$, the critical site percolation parameter is also 1. If instead $G$ has super-linear (yet still quadratic) volume growth, we may follow the proof of \cite[Theorem~1.7]{hutchcroftNontrivialityPhaseTransition2024}; in particular, we follow the case where \mbox{$m:= \sup\{n\in \mathbb{N}: |B(o,n)| \geq n^5 \}<\infty$}. 

Indeed, by doing so, we may see that there exist groups $H<\mathrm{Aut}(G)$ and $\Gamma<\mathrm{Aut}(G/H)$ such that: $\Gamma$ acts transitively on $V/H$; there exists a finite symmetric generating set $S$ for $\Gamma$; the growth of $\mathrm{Cay}(\Gamma, S)$ is superlinear; and $G/H$ is isomorphic to a quotient of $\mathrm{Cay}(\Gamma, S)$ by a subgroup $F$ of $\mathrm{Aut}(\mathrm{Cay}(\Gamma, S))$ of order at most $M$ - a universal constant\footnote{The constant $M$ depends only on the exponent $5$ chosen in the above definition of $m$ \cite{hutchcroftNontrivialityPhaseTransition2024}, which is fine, since $m$ is indeed finite in the case that $G$ is recurrent.}. Moreover, by Theorem~\ref{cayley_gap}, there exists a universal constant $\varepsilon>0$ such that $p_c(\mathrm{Cay}(\Gamma, S))\leq1-\varepsilon$. Hence, since isomorphisms preserve percolation parameters and by Lemma~\ref{groups}, we have that
\[
p_c(G/H) = p_c(\mathrm{Cay}(\Gamma, S)/F) \leq 1-\varepsilon_0
\]
for some universal constant $\varepsilon_0>0$ depending only on $M$ and $\varepsilon$. Indeed, let $V':=V(\mathrm{Cay}(\Gamma, S))$ and let 
\[
A_\infty := \left\{ \xi \in \{0,1\}^{V'/F}: |\{u\in V'/F: \xi(u)=1\}|=\infty \right\}.
\]
Then $A_\infty$ is increasing and measurable, and using the same notation as in Lemma~\ref{groups},
\[
\{C_u\in A_\infty\} := \{\mathbf{1}_{C_u}\in A_\infty\} =\{|C_u|=\infty\}.
\]

Choose $p>1-\varepsilon\geq p_c(\mathrm{Cay}(\Gamma, S))$ and $v\in V'$ such that $\mathbb{P}_p^{\mathrm{Cay}(\Gamma, S)}(|C_v|=\infty)>0$. Since $F$ has order at most $M$, and writing $\tau$ for the quotient map $\tau:\mathrm{Cay}(\Gamma, S)\rightarrow \mathrm{Cay}(\Gamma, S)/F$, we have that $|C_v|=\infty$ implies $|\tau(C_v)| = \infty$. Hence, by Lemma~\ref{groups} applied with $A=A_\infty$,
\[
\mathbb{P}^{\mathrm{Cay}(\Gamma, S)/F}_{1-(1-p)^M}\!\left(|C_{\tau(v)}| =\infty\right) \geq 
\mathbb{P}^{\mathrm{Cay}(\Gamma, S)}_{p}\!\left(|\tau(C_v)| = \infty\right) \geq 
\mathbb{P}^{\mathrm{Cay}(\Gamma, S)}_{p}\!\left(|C_v| = \infty\right) > 0,
\]
which implies that $p_c(\mathrm{Cay}(\Gamma, S)/F)\leq1-\varepsilon^M$ by letting $p\downarrow 1-\varepsilon$, so we may take $\varepsilon_0:=\varepsilon^M$. Finally, Theorem~\ref{benj} implies that $p_c(G) \leq 1-\varepsilon_0$, concluding the case where $G$ is recurrent.

Suppose now that $G$ is transient. Then, by \cite[Corollary~1.3]{tesseraSharpRelationsVolume2024}, there exists a universal constant $c>0$ such that the simple random walk on $G$ has escape probability at least $c$. Since $G$ is vertex-transitive, it has constant degree and thus the walk $(X_t)_{t\in \mathbb{N}_0}$ in Theorem~\ref{thm2_extend} reduces exactly to the simple random walk. Hence, we may follow the proof of Theorem~\ref{thm2_extend} with $\varepsilon=c$, so that $\kappa^{\mathrm{site}}(G) \leq K(c)=:K$, which is also universal. Moreover, for a vertex $u\in V$, the classical Peierls argument gives that
\[
\mathbb{P}_p(|C_u|<\infty) = 1-p+ p\,\mathbb{P}_p(|C_u|<\infty \mid u \text{ is open}) \leq 1-p+p\sum_{n\geq 1}q_n^{\mathrm{site}}(1-p)^n,
\]
by a union bound since, on the event that $u$ is open and its cluster is finite, $\partial^VC_u$ is closed and contains a minimal vertex cutset. Hence,
\[
\mathbb{P}_p(|C_u|<\infty) \leq 1-p+p\sum_{n\geq 1}K^n(1-p)^n,
\]
and so
\[
\theta_u(p) = \mathbb{P}_p(|C_u|=\infty) \geq p-p\sum_{n\geq 1}K^n(1-p)^n,
\]
which is positive if $p>1-1/(2K)$. Hence, $p_c \leq 1-1/(2K)$. Thus, we may take $\varepsilon_1 = \min\{\varepsilon_0, 1/(2K)\}$, concluding the proof.

\section{Proof of Corollary~\ref{dim2}}\label{sec:proof4}

If $\mathrm{Dim}(G;c) > 2$, then by properties of the supremum, there exists $d>2$ and $\beta\in(0,1)$ such that 
\begin{equation}\label{dim_bound}
|\partial^E K|_c \geq \beta |K|_\pi^{(d-1)/d},
\end{equation}
for all finite $K\subset V$. Hence, the proof of Corollary~\ref{dim2} is complete if we can show that there exists $M:=M(\beta,d)$ such that $\kappa^{\mathrm{site}}(G) \leq M < \infty$; we do so via Theorem~\ref{lp6.41}. Firstly, taking $K$ to be a singleton $\{x\}$ in \eqref{dim_bound}, we see that
\[
\pi(x) = |\partial^E \{x\}|_c \geq \beta |\{x\}|_\pi^{(d-1)/d} = \beta \pi(x)^{(d-1)/d}
\]
and since $\pi(x)>0$ for all $x$ due to connectivity of $G$, we may divide both sides by $\pi(x)^{(d-1)/d}$ so that $\pi(x)\geq \beta^d$. Hence, $|V(G)|_\pi \geq \beta^d|V(G)| = \infty$.

Now, recall the isoperimetric profile(s) defined in Section~\ref{exp_bound_con_ineq}; since the anchored profile takes an infimum over a smaller class of sets than the unanchored profile, we immediately get that $\varphi(G, A, t;c) \geq \varphi(G, t;c) \geq \beta t^{(d-1)/d} =: f(t)$ for any non-empty, finite set $A\subset V$ and for all $t>0$. Note that this bound is independent of the anchor set $A$, and, in particular, we may take $A$ to be any singleton $\{a\}$.

Furthermore, $f$ is clearly positive and increasing in $t$ since $(d-1)/d > 0$. In addition,
\[
f(t)\leq t \iff \beta t^{(d-1)/d} \leq t \iff \beta \leq t^{1/d} \impliedby t\geq \pi(a) \geq \beta^d,
\]
where the implication follows from the relevant domain of $f$ being $[\pi(a),\infty)$ and the lower bound on $\pi(a)$ from above. Now, $f(2t) = 2^{(d-1)/d} f(t)$, so if we set $\alpha := 2^{(d-1)/d}$, Theorem~\ref{lp6.41} applies and gives, for all vertices $a\in V$, that
\[
\mathscr{R}(a \leftrightarrow \infty) \leq \int_{\beta^d}^\infty \frac{4 \cdot 2^{2(d-1)/d}}{\beta^2 t^{2(d-1)/d}}\, dt.
\]
This integral is finite since $d>2$. Hence,
\[
\mathscr{C}(a \leftrightarrow \infty) \geq \left(\int_{\beta^d}^\infty \frac{4 \cdot 2^{2(d-1)/d}}{\beta^2 t^{2(d-1)/d}}\, dt\right)^{-1} =: \varepsilon(\beta,d)>0.
\]
By \eqref{first_step} and Theorem~\ref{thm2_extend}, there exists $M:=M(\beta,d):=K(\varepsilon(\beta,d))$ such that $\kappa^{\mathrm{site}}(G) \leq M < \infty$, which concludes the proof.

\printbibliography

\end{document}